\theoremstyle{definition}
\numberwithin{definition}{section}
\newtheorem{theorem}{Theorem}
\numberwithin{theorem}{section}
\newtheorem{lemma}[theorem]{Lemma}
\newtheorem{corollary}[theorem]{Corollary}
\newtheorem{proposition}[theorem]{Proposition}
\newtheorem{remark}[theorem]{Remark}
\newcommand{\abs}[1]{\left| #1 \right|}
\numberwithin{equation}{section}
\newcommand{\N}{\mathbb N}
\newcommand{\Z}{\mathbb Z}
\newcommand{\C}{\mathbb C}
\newcommand{\g}{\mathfrak g}
\newcommand{\h}{\mathfrak h}
\newcommand{\n}{\mathfrak n}
\newcommand{\A}{\mathcal A}
\newcommand{\sll}{\mathfrak{sl}}
\DeclareMathOperator{\Hom}{Hom}
\title{A Bethe Ansatz for Symmetric Groups}
\author{Aaron Marcus}
\email{amarcus@math.uchicago.edu}
\date{February 28, 2010}
\begin{document}

\begin{abstract}We examine the commuting elements $\theta_i=\sum_{j\neq i} \frac{s_{ij}}{z_i-z_j}$, $z_i\neq z_j$, $s_{ij}$ the transposition swapping $i$ and $j$, and we study their actions on irreducible $S_n$ representations.  By applying Schur-Weyl duality to the results of \cite{RV:QuasiKZ}, we establish a Bethe Ansatz for these operators which yields joint eigenvectors for each critical point of a master function.  By examining the asymptotics of the critical points, we establish a combinatorial description (up to monodromy) of the critical points and show that, generically, the Bethe vectors span the irreducible $S_n$ representations.
\end{abstract}

\maketitle

\section{Introduction}

Let $S_n$ be the symmetric group on $n$ elements, let $s_{ij}$ denote the transposition swapping $i$ and $j$, where $i,j\leq n$, and let $\overline{\Delta}=\{(z_1, \ldots, z_n)\in \mathbb{C}^n \mid z_i=z_j \text{ for some } i\neq j\}$ be the ``big diagonal.''  For each $z\in \mathbb{C}^n\backslash \overline{\Delta}$, we have $n$ pairwise commuting elements $\theta_{iz}\in\C[S_n]$ given by
\begin{equation*}
\theta_{iz} = \sum_{j\neq i} \frac{s_{ij}}{z_i-z_j}.
\end{equation*}
  Note that, in the future, we will suppress the dependence of $\theta_{iz}$ on $z$ by writing $\theta_i$. 
  
The operators $\theta_i$ can be viewed as a limit of the Dunkl operators $\mathcal{D}_i:\C[z_1, \ldots z_n]\to \C[z_1, \ldots z_n]$ defined by $\mathcal{D}_i(f)=\frac{\partial f}{\partial z_i}+k\sum_{j\neq i}\frac{f-s_{ij}f}{z_i-z_j}$.  It is easy to see that $\theta_i=\lim_{k\to \infty} \sum_{j\neq i}\frac{1}{z_i-z_j} - \mathcal{D}_i/k$.  In this way, the study of these operators helps to elucidate a degeneracy of the double affine Hecke algebra.  Similarly, the $\theta_i$ may be viewed as deformations of the Jucys-Murphy elements $\Theta_i\in \C[S_n]$ defined by $\Theta_i=\sum_{j<i}s_{ij}$.  Indeed, if we take the limit appropriately, $\lim z_i\theta_i=\Theta_i$.  The Jucys-Murphy elements are useful in the the representation theory of $S_n$, and provide a foothold for the analysis of the $\theta_i$.
  
Given an irreducible $S_n$ representation $W^{\lambda}$, we study the action of the $\theta_{i}$.  Since they pairwise commute, it is natural to ask if these operators act semi-simply and what their joint eigenvalues are.  Our first result is that there exists a complex rational function $S(t,z)$ (which we refer to as the \emph{master function}) and a representation valued auxiliary function $\Phi(t,z)$ such that if $t$ is a critical point for a fixed value of $z$, then $\Phi(t,z)$ is a joint eigenvector of the $\theta_i$ (which we refer to as \emph{Bethe vectors}).  Additionally, the eigenvalues for $\Phi(t,z)$ are given by $\partial_{z_i}S$.

When there are enough critical points, the Bethe vectors will span $W^{\lambda}$, and thus the master function yields both the semi-simplicity of the $\theta_i$ and computes their joint spectrum.  However, there are insufficiently many critical points for some values of $z$, and it is not directly evident that there are ever sufficiently many critical points.  To establish the existence of enough critical points, we examine the asymptotics of the critical points and construct a critical point for every standard Young tableau and show that the corresponding Bethe vectors approach the eigenvectors of the Jucys-Murphy elements.  We then argue that the existence of enough critical points asymptotically implies the existence of enough critical points generically.

The paper is organized as follows.  

In Section \ref{GH}, we first recall some basic notation for semi-simple Lie algebras and then recall some results from \cite{RV:QuasiKZ} and \cite{MTV:critical} on the Gaudin hamiltonians, including establishing a master function for the Gaudin hamiltonians and the use of the Bethe Ansatz to find their eigenvalues.  In Section \ref{SW}, we recall a construction of irreducible $S_n$ representations and a statement of Schur-Weyl duality.
In Section \ref{JM}, we review the necessary background on Jucys-Murphy elements and the combinatorics of representations of $S_n$. In Section \ref{link}, we establish a connection between the Gaudin hamiltonians and our $\theta_i$, and establish a master function associated to the eigenvalues of the $\theta_i$.  In Section \ref{asymptotics}, we examine the asymptotics of the master function and show that the critical points can be described combinatorially, which will show that, generically, the $\theta_i$ act semi-simply on Specht modules. Finally, in Section \ref{sec:proof}, we prove Theorem \ref{thm:critical} by showing that that the critical points described in Section \ref{asymptotics} can be constructed inductively.

\subsection{Acknowledgements} I would like to thank Victor Ginzburg both for suggesting this problem and for many fruitful discussions.  Additionally, I would like to thank Ian Shipman for reading an early draft of the paper and pointing out areas for improvement.

\section{Gaudin Hamiltonians} \label{GH}

Let $\mathfrak{g}$ be a semi-simple lie algebra, $\h$ a Cartan subalgebra, $R$ a set of roots, $R_+$ a choice of positive roots, and $\Delta_+$ the positive simple roots.   We then have a decomposition $\mathfrak{g}=\mathfrak{n}_-\oplus \mathfrak{h}\oplus \mathfrak{n}_+$ where $\n_+$ is the collection of positive root spaces and $\n_-$ is the collection of negative root spaces.  The killing form on $\g$ is $\langle -,-\rangle$ is defined by $\langle x,y \rangle =\operatorname{Tr}(\operatorname{ad}_x\operatorname{ad}_y)$, and up to scaling is the unique non-degenerate, invariant, symmetric bilinear form on $\g$.  The restriction of the Killing form to $\h$ is non-degenerate, and so induces an isomorphism $\varphi:\h \to \h^*$ via $\varphi(h)=\langle h, - \rangle$.  We can thus transport the Killing form to $\h^*$ by demanding this map to be an isometry.  We will use the same notation for the Killing form, it's restriction, and the transported killing form, though the usage should be clear by context.  

The Killing form gives an isomorphism $\mathfrak{g}\cong \mathfrak{g}^*$.  We then have a chain of isomorphisms $\operatorname{End}(\mathfrak{g})\cong \mathfrak{g}\otimes \mathfrak{g}^* \cong \mathfrak{g}\otimes \mathfrak{g}$, and we call the image of the identity the \emph{Casimir element}, which we denote by $\Omega$.  Note that if $\{g_i\}$ is a basis for $\mathfrak{g}$ and $\{g_i'\}$ is the corresponding dual basis, then $\Omega=\sum g_i\otimes g_i'$.  If $V_1, \ldots, V_N$ are representations of $\g$, then we can define operators $\Omega_{ij}$ by $\Omega_{ij}(v_1\otimes \cdots \otimes v_i\otimes  \cdots \otimes v_j\otimes \cdots \otimes v_N)=\sum_k v_1\otimes \cdots \otimes g_k v_i \otimes \cdots \otimes g_k'v_j \otimes\cdots \otimes v_N$.  We have that $\Omega_{ij}\in \operatorname{End}_{\C}(V_1\otimes \cdots \otimes V_N)$, and that the action of $\Omega_{ij}$ commutes with the action of $\mathfrak{g}$.  

For a fixed $z\in \C^{n}\backslash\overline{\Delta}$, we define the \emph{Gaudin Hamiltonions} by $\tilde{\theta}_i=\sum_j \frac{\Omega_{ij}}{z_i-z_j}$.  These commuting operators appear in the KZ-equation for the Gaudin model associated to a semi-simple lie algebra.

\subsection{The Bethe Ansatz}

One can find the eigenvectors and eigenvalues of $\sum_j \frac{\Omega_{ij}}{z_i-z_j}$ by applying the method of Bethe Ansatz.  In \cite{SV:HyperLie}, it is shown that if $\mathfrak{g}$ is a semi-simple Lie algebra and $V_1,\ldots V_N$ are highest weight representations, there are solutions to the system of differential equations $\partial_i f=\kappa \sum_j \frac{\Omega_{ij}}{z_i-z_j} f$ given by hypergeometric integrals, where $f$ takes values in $V=V_1\otimes V_2\otimes \cdots \otimes V_N$.  If one takes an asymptotic expansion of $f$ and take the limit as $\kappa \to \infty$, this yields an eigenvector, see \cite{RV:QuasiKZ}.

Let $\alpha_1,\ldots \alpha_r\in \h^*$ be the positive simple roots of $\g$, let $\Lambda_i$ be the highest weight of $V_i$, $\Lambda=\sum \Lambda_i$, and given $\mathbf{m}=(m_1, \ldots, m_r)\in \Z_{\geq 0}^r$, let $\Lambda_{\mathbf{m}}=\Lambda-\sum m_i \Lambda_i$.  This allows us to parameterize the weight spaces of $V$ by nonnegative integers.  Additionally, if $\beta\in \h^*$, we denote the weight space of weight $\beta$ in a representation $M$ by $M_{\beta}=\{m\in M \mid hm=\beta(h)m\}$, and we denote by $M_{\beta}^{\n}=\{m\in M_{\beta} \mid \n m=0\}$ the highest weight vectors of weight $\beta$.  We abbreviate $V_{\mathbf m}:=V_{\Lambda_{\mathbf m}}$.  

Given $\mathbf{m}$ as above, and letting $\abs{\mathbf m}=\sum m_i$, we pick coordinates on $\C^{\abs{\mathbf m}}$ as $t_1^{(1)},\ldots, t_1^{(m_1)}$, $t_2^{(1)},\ldots$, $t_r^{(1)} \ldots, t_r^{(m_r)}$.  We order the coordinates by ordering the pairs $(i,j)$ lexicographically, so that $(i,j)<(k,\ell)$ if $i<j$ or $i=j$ and $k<\ell$.  Additionally, we give coordinates on $\C^{{\mathbf{m}}+N}$ by letting $z_1, \ldots z_N$ be coordinates for $\C^N$ and viewing $\C^{{\mathbf{m}}+N}=\C^{{\mathbf{m}}}\times \C^N$.  Define the function

\begin{multline*}
\Phi_{\mathbf m}(t,z) ={\prod_{k<\ell}\left(z_k-z_{\ell}\right)^{\langle \Lambda_k, \Lambda_{\ell}\rangle}} 
                \cdot {\prod_{k,i,j}\left(z_k-t_i^{(j)}\right)^{-\langle \Lambda_k, \alpha_i\rangle}} 
                \cdot {\prod_{(i,j)<(k,\ell)}\left(t_i^{(j)}-t_k^{(\ell)}\right)^{\langle \alpha_i, \alpha_k\rangle}}
\end{multline*}

Let $S_{\mathbf m}(t,z)=\log \Phi_{\mathbf m}(t,z)$.  Explicitly,

\begin{multline}\label{eq:S}
S_{\mathbf m}(t,z) ={\sum_{k<\ell}{\langle \Lambda_k, \Lambda_{\ell}\rangle}\log\left(z_k-z_{\ell}\right)} 
                - \sum_{k,i,j}{\langle \Lambda_k, \alpha_i\rangle}\log\left(z_k-t_i^{(j)}\right)  \\
                + \sum_{(i,j)<(k,\ell)}{\langle \alpha_i, \alpha_k\rangle}\log\left(t_i^{(j)}-t_k^{(\ell)}\right)
\end{multline}

For a fixed value of $z$, say that $t$ is a critical point of $S$ if $\frac{\partial S}{\partial t_i^{(j)}}(t,z)=0$ for all $i,j$.  As we will have need to look at $S_{\mathbf{m}}$ for and its critical points for different choices of $\Lambda_i$ and $\mathbf{m}$, we will refer to critical points with a specific choice of parameters as being of \emph{weight} $\sum \Lambda_i -\sum m_i \alpha_i$.  We say that $t$ is a nondegenerate critical point if the Hessian $\left(\frac{\partial^2 S}{\partial t_{i}^{(j)}\partial t_k^{(\ell)}} \right)$ is nonsingular.  The space of nondegenerate critical points is an algebraic subset of $\C^n\backslash \overline{\Delta} \times \C^{\abs{\mathbf{m}}}\backslash \overline{\Delta}$, and the projection $p_1$ onto $\C^n \backslash \overline{\Delta}$ is quasi-finite, and moreover is \'etale when restricted to the non-degenerate critical points.  Using the inverse function theorem, we see that generically we can locally find holomorphic sections of $p_1$, which we call families of critical points.  Suppose that $t=t(z)$  is a family of nondegenerate critical points.  The following is proved in \cite{RV:QuasiKZ}.

\begin{theorem} \label{theorem:master}
Given $t(z)$ as above, there is a function $\Phi$ with $\Phi(t,z)\in V_{\mathbf m}^{\n}$ such that $\Phi(t(z),z)$ is an eigenvector for the operators $\sum_j \frac{\Omega_{ij}}{z_i-z_j}$ with corresponding eigenvalues $\frac{\partial S}{\partial z_i}(t(z),z).$
\end{theorem}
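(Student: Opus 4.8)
The plan is to construct $\Phi$ explicitly as a "weight function" and then differentiate the hypergeometric solutions of the Knizhnik--Zamolodchikov equations. First I would recall from \cite{SV:HyperLie} the universal weight function: to each coordinate $t_i^{(j)}$ one associates the simple-root generator $f_{\alpha_i}\in\n_-$, and summing over all ways of distributing the lowering operators among the tensor factors $V_1,\dots,V_N$, with rational coefficients built from the differences $z_k-t_i^{(j)}$ and $t_i^{(j)}-t_k^{(\ell)}$, produces a $V_{\mathbf m}$-valued rational function $w(t,z)$. One then sets $\Phi(t,z)=\Phi_{\mathbf m}(t,z)\,w(t,z)$, or more precisely works with the integrand $\Phi_{\mathbf m}(t,z)w(t,z)\,dt$ of the hypergeometric solution $f(z)=\int \Phi_{\mathbf m}(t,z)w(t,z)\,dt$ of the system $\partial_{z_i}f=\kappa\sum_j\frac{\Omega_{ij}}{z_i-z_j}f$.

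The key computation is the asymptotics as $\kappa\to\infty$. Rescaling $\Phi_{\mathbf m}=\exp(S_{\mathbf m})$ to $\exp(\kappa S_{\mathbf m})$ (this is the semiclassical limit used in \cite{RV:QuasiKZ}), I would apply the stationary-phase / steepest-descent principle to the integral $\int e^{\kappa S_{\mathbf m}(t,z)}w(t,z)\,dt$ over a cycle passing through the critical point $t(z)$. Because $t(z)$ is a \emph{nondegenerate} critical point of $S_{\mathbf m}$ in the $t$-variables, the Hessian is invertible and the leading term of the asymptotic expansion is
\begin{equation*}
f(z)\sim e^{\kappa S_{\mathbf m}(t(z),z)}\left(\frac{2\pi}{\kappa}\right)^{|\mathbf m|/2}\bigl(\det(-\partial^2_{t}S_{\mathbf m})\bigr)^{-1/2}\,w(t(z),z)\,\bigl(1+O(\kappa^{-1})\bigr).
\end{equation*}
Dividing through by the scalar prefactor, the normalized limit is $\Phi(t(z),z):=w(t(z),z)$. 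That $\Phi(t,z)$ lands in $V_{\mathbf m}^{\n}$, i.e.\ is a singular vector of the correct weight, is exactly the defining property of the weight function established in \cite{SV:HyperLie}; the weight count $\Lambda-\sum m_i\alpha_i$ is built into the construction, and the singular-vector condition $\n_+ w=0$ holds precisely when $t$ is a critical point of $S_{\mathbf m}$ (this is the Bethe equation).

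To extract the eigenvalue statement I would substitute this asymptotic expansion into the KZ equation itself. Writing $f(z)=g(z)\,e^{\kappa S_{\mathbf m}(t(z),z)}$ with $g$ admitting an expansion $g=g_0+\kappa^{-1}g_1+\cdots$ and $g_0=\Phi(t(z),z)$ up to the scalar normalization, the left side $\partial_{z_i}f$ contributes, at order $\kappa^1$, the term $\kappa\,\partial_{z_i}\!\bigl(S_{\mathbf m}(t(z),z)\bigr)g_0\,e^{\kappa S_{\mathbf m}}$; here one uses that $\partial_{z_i}\bigl(S_{\mathbf m}(t(z),z)\bigr)=\partial_{z_i}S_{\mathbf m}\big|_{t=t(z)}$ because the $t$-derivatives of $S_{\mathbf m}$ vanish at the critical point (chain rule kills the $\partial t(z)/\partial z_i$ terms). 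The right side at order $\kappa^1$ is $\kappa\sum_j\frac{\Omega_{ij}}{z_i-z_j}g_0\,e^{\kappa S_{\mathbf m}}$. Equating the $\kappa^1$ coefficients yields
\begin{equation*}
\sum_j\frac{\Omega_{ij}}{z_i-z_j}\,\Phi(t(z),z)=\frac{\partial S_{\mathbf m}}{\partial z_i}(t(z),z)\,\Phi(t(z),z),
\end{equation*}
which is the assertion.

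The main obstacle is the rigorous justification of the steepest-descent estimate: one must produce a genuine middle-dimensional integration cycle through $t(z)$ on which $\mathrm{Re}\,S_{\mathbf m}$ attains a strict maximum (a Lefschetz thimble), check convergence of the multivalued integrand near the hyperplane arrangement $t_i^{(j)}=z_k$ and $t_i^{(j)}=t_k^{(\ell)}$, and confirm that the weight function $w(t(z),z)$ is nonzero so that the leading coefficient genuinely survives. All of this is carried out in \cite{RV:QuasiKZ}, so for our purposes the result may be quoted; the role of the nondegeneracy hypothesis on $t(z)$ is precisely to guarantee the invertibility of the phase Hessian that makes the asymptotic expansion valid.
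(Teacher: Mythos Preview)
Your sketch is correct and is, in fact, more than the paper itself provides: the paper does not prove this theorem at all but simply attributes it to \cite{RV:QuasiKZ} (see the sentence immediately preceding the statement). What you have written is an accurate outline of the Reshetikhin--Varchenko argument---construct the Schechtman--Varchenko weight function, form the hypergeometric integral solving the KZ system with parameter $\kappa$, apply steepest descent at the nondegenerate critical point $t(z)$, and read off the eigenvalue relation from the leading $\kappa$-coefficient of the KZ equation, using the chain rule and the criticality condition to kill the $\partial t/\partial z$ contributions.

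One small clarification worth making explicit: the weight function $w(t,z)$ lies in the weight space $V_{\mathbf m}$ for \emph{all} $t$, but lands in the singular subspace $V_{\mathbf m}^{\n}$ only when $t$ satisfies the Bethe equations, i.e.\ is a critical point of $S_{\mathbf m}$. You say this, but the phrasing ``is exactly the defining property of the weight function established in \cite{SV:HyperLie}'' slightly obscures that the singular-vector property is a theorem about critical points, not a definition. Otherwise the argument, including your identification of the analytic obstacles (existence of the Lefschetz thimble, convergence near the discriminantal arrangement, nonvanishing of $w(t(z),z)$) and your deferral of these to \cite{RV:QuasiKZ}, is exactly how this result is established in the literature the paper cites.
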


We refer to the eigenvectors generated by $\Phi$ in the theorem as \emph{Bethe vectors}.  While the Bethe vector associated to a non-degenerate critical point is nonzero, we do not a priori know that different Bethe vectors for a fixed $z$ are distinct when they have the same eigenvalues.  However, in the case of the general linear group, we have the following result from \cite{MTV:critical} which remedies this problem.

\begin{theorem}
If $\g=\sll_n$, each $V_i$ is a finite dimensional irreducible representation, and $\mathbf{m}$ is such that $(V_1 \otimes \cdots \otimes V_n)_{\mathbf{m}}^{\n}$ is nonempty and $\sum \Lambda_i - \sum m_j \alpha_j$ is dominant and integral, then, for fixed values of $z$, the critical points of $S$ yield linearly independent vectors.  Moreover, if any of the critical points are degenerate, these vectors do not span $V_{\mathbf m}^{\n}$.
\end{theorem}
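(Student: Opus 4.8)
The plan is to derive both assertions from the structure of the \emph{Bethe algebra} acting on the weight space $\mathcal{E}:=(V_1\otimes\cdots\otimes V_N)^{\n}_{\mathbf m}$, which is the approach of \cite{MTV:critical}. The Gaudin Hamiltonians $\sum_j\Omega_{ij}/(z_i-z_j)$ are the lowest members of an infinite commuting family --- the higher Gaudin Hamiltonians, arising from the Feigin--Frenkel center of $\widehat{\sll}_n$ at the critical level --- and together, for fixed $z$, they generate a commutative subalgebra $\mathcal{B}(z)\subset U(\sll_n)^{\otimes N}$ preserving $\mathcal{E}$; write $\mathcal{A}(z)$ for its image in $\operatorname{End}_{\C}(\mathcal{E})$. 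The Bethe vector $\Phi(t,z)$ attached to a critical point $t$ by a universal rational formula (as in \cite{RV:QuasiKZ},\cite{SV:HyperLie}, extending Theorem \ref{theorem:master}) is a joint eigenvector of all of $\mathcal{B}(z)$, not merely of its $\Omega$-part.

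First I would establish two structural facts. (i) \emph{$\mathcal{E}$ is a free rank-one $\mathcal{A}(z)$-module.} Equivalently, $\mathcal{E}$ is cyclic over $\mathcal{A}(z)$ --- equivalently $\mathcal{A}(z)$ is a maximal commutative subalgebra of $\operatorname{End}_{\C}(\mathcal{E})$ --- so that $\dim_{\C}\mathcal{A}(z)=\dim_{\C}\mathcal{E}$; moreover $\mathcal{A}(z)$ is Frobenius, the self-duality being furnished by the Shapovalov (contravariant) form, with respect to which the Gaudin Hamiltonians and their higher analogues are self-adjoint and whose restriction to $\mathcal{E}$ is nondegenerate once $\sum\Lambda_i-\sum m_j\alpha_j$ is dominant and integral. (ii) \emph{Identification with the critical scheme.} There is an isomorphism of $\C$-algebras $\mathcal{A}(z)\cong\mathcal{O}\bigl(\operatorname{Crit}_z S\bigr)$, where $\operatorname{Crit}_z S=\operatorname{Spec}\bigl(\C[t]/(\partial S/\partial t_i^{(j)})\bigr)$ is the (finite, under the dominance hypothesis) scheme of critical points of $t\mapsto S_{\mathbf m}(t,z)$; under this isomorphism the points of $\operatorname{Crit}_z S$ correspond to the maximal ideals of $\mathcal{A}(z)$, the joint eigenvalue of $\mathcal{B}(z)$ at a critical point $t$ is recovered from evaluation at $t$ (so the $i$-th Gaudin Hamiltonian gets eigenvalue $\partial_{z_i}S_{\mathbf m}(t,z)$, matching Theorem \ref{theorem:master}), and $\Phi(t,z)$ is a nonzero element of the socle of the corresponding local summand $\mathcal{E}_t$.

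Granting (i) and (ii), both assertions are formal. Decompose $\mathcal{A}(z)=\prod_t\mathcal{A}(z)_t$ over the finitely many critical points and correspondingly $\mathcal{E}=\bigoplus_t\mathcal{E}_t$; each $\mathcal{A}(z)_t$ is a local Artinian Frobenius --- hence Gorenstein --- $\C$-algebra, so has one-dimensional socle, and $\mathcal{E}_t$ is free of rank one over it, whence $\dim_{\C}\mathcal{E}_t=\dim_{\C}\mathcal{A}(z)_t=\operatorname{mult}_t(\operatorname{Crit}_z S)\ge 1$, with equality precisely when $t$ is a reduced point of $\operatorname{Crit}_z S$; by the Jacobian criterion this happens exactly when the Hessian of $S_{\mathbf m}(\cdot,z)$ at $t$ is nonsingular, i.e.\ when $t$ is nondegenerate. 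Since $\Phi(t,z)$ spans the one-dimensional $\operatorname{soc}(\mathcal{E}_t)$ and distinct critical points lie in distinct summands $\mathcal{E}_t$, the Bethe vectors are linearly independent --- the first assertion. They span $\mathcal{E}$ iff $\dim_{\C}\mathcal{E}=\#\{\text{critical points}\}$, i.e.\ iff every $\mathcal{E}_t$ is one-dimensional, i.e.\ iff no critical point is degenerate; and if some $t_0$ is degenerate then $\dim_{\C}\mathcal{E}=\sum_t\dim_{\C}\mathcal{E}_t>\#\{\text{critical points}\}=\#\{\text{Bethe vectors}\}$, so the linearly independent Bethe vectors cannot span --- the second assertion.

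The main obstacle is fact (ii), which is where the representation theory of the Gaudin model genuinely enters and where dominance and integrality of the weight are indispensable. One parametrizes the joint spectrum of $\mathcal{B}(z)$ by $\sll_n$-opers on $\mathbb{P}^1$ with regular singularities of prescribed exponents at $z_1,\dots,z_N$ and at $\infty$; one checks that the construction of Bethe vectors from the master function (\cite{RV:QuasiKZ},\cite{SV:HyperLie}) induces the map from critical points of $S_{\mathbf m}$ to such opers underlying the algebra map $\mathcal{A}(z)\to\mathcal{O}(\operatorname{Crit}_z S)$; and then the crucial step is to promote a bijection on points to an isomorphism of schemes, for which one needs the dimension count $\dim_{\C}\mathcal{E}=\#\{\text{critical points counted with multiplicity}\}$ --- a statement of the same flavour as the finiteness and degree of the Wronski map, and one that fails without dominance and integrality. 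Fact (i) is by comparison soft: self-adjointness of the Gaudin Hamiltonians for the Shapovalov form is a direct computation, nondegeneracy of the form on $\mathcal{E}$ follows from complete reducibility in the dominant-integral case, and then maximality of $\mathcal{A}(z)$ can be obtained by combining (ii) with the dimension count, or independently from the known cyclicity of $\mathcal{E}$ as a $\mathcal{B}(z)$-module.
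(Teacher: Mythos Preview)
The paper does not prove this theorem at all: it is stated as a quotation from \cite{MTV:critical} (``we have the following result from \cite{MTV:critical} which remedies this problem'') and no argument is given. So there is nothing in the paper to compare your proposal against beyond the citation itself.

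That said, your sketch is a faithful outline of the Mukhin--Tarasov--Varchenko strategy, and in that sense it is consistent with what the paper is invoking. A few points worth flagging if you intend this as a self-contained proof rather than a summary of \cite{MTV:critical}. First, the critical scheme should be taken modulo the action of $\prod_i S_{m_i}$ permuting the $t_i^{(j)}$ with fixed $i$; without this quotient the algebra isomorphism in (ii) is off by that symmetry factor. Second, the assertion that $\Phi(t,z)$ is nonzero and spans the socle of $\mathcal{E}_t$ even when $t$ is a \emph{degenerate} critical point is itself one of the nontrivial outputs of the MTV machinery (the paper explicitly remarks that only the nondegenerate case is a priori clear), so you cannot treat it as an input. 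Third, your derivation of (i) leans on (ii) for the dimension count, which is fine, but then (i) is not really an independent structural fact---it is part of the same package. None of these are fatal to the outline; they just mean that the ``main obstacle'' paragraph is doing essentially all of the work, and what you have written is a correct roadmap to \cite{MTV:critical} rather than a proof that stands on its own.
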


\subsection{The case $\mathfrak{g}=\mathfrak{sl}_n, V_i={\C^n}$} 

Now, we specialize the results to the case of $\g=\mathfrak{sl}_n, V_i=\C^n$.  We let $\C^n$ have standard basis $v_1, \ldots v_k$, and we let $e_{ij}$ denote the matrix with a $1$ in the $(i,j)$ position and zeros elsewhere.  We view $\g$ as the space of trace zero matrices, $\h$ the subspace of diagonal matrices, $n$ the space of strictly upper triangular matrices, and $\n_-$ the space of strictly lower triangular matrices.  Let $L_i \in \h^*$ be the functional such that $L_i(a_1 e_{11} + \ldots a_n e_{nn})=a_i$.  Note that $\h^*=\C[L_1,\ldots L_n]/(\sum_i L_i=0)$.  Each $e_{ij}$ spans the root space corresponding to the root $L_i-L_j$.  Set $\alpha_i=L_i-L_{i+1}$ to be the set of simple positive roots.  When we transfer the Killing form to $\h^*$, we have that $\langle \sum a_i L_i, \sum b_j L_j \rangle = (1/2n)(\sum_i a_i b_i-(1/n)(\sum a_i)(\sum b_i))$ (see \cite{FH:RepTheory}).  Thus, 
$$\langle \alpha_i,\alpha_j\rangle=\begin{cases} 1/n \quad & i=j \\ -1/2n & \abs{i-j}=1 \\ 0 & \abs{i-j}>1 \end{cases}.$$

When we view $\mathfrak{sl}_n$ acting on $\C^n$, $v_1$ is the highest weight vector with weight $L_1$.  Since $\langle L_1,L_1 \rangle=(1/2n)(1-1/n)$ and $\langle L_1,\alpha_i\rangle=(1/2n)(\delta_{1i})$, equation $\ref{eq:S}$ becomes

\begin{multline}\label{eq:S2}
S_{\mathbf m}(t,z) =\frac{n-1}{2n^2}{\sum_{k<\ell}\log\left(z_k-z_{\ell}\right)} 
                - \frac{1}{2n}\sum_{k,j}\log\left(z_k-t_1^{(j)}\right)  \\
                + \frac{1}{n}\sum_i \sum_{j<\ell} \log\left(t_i^{(j)}-t_i^{(\ell)}\right) 
                -\frac{1}{2n} \sum_{i<n-1}\sum_{j,\ell}\log\left(t_i^{(j)}-t_{i+1}^{(\ell)}\right).
\end{multline}

We can also compute the partial derivatives.
\begin{multline}\label{eq:S2dt}
\frac{\partial S_{\mathbf m}}{\partial t_i^{(j)}}(t,z) =
                \frac{\delta_{1=i}}{2n}\sum_{k}\frac{1}{z_k-t_1^{(j)}}
                + \frac{1}{n}\sum_{j\neq\ell} \frac{1}{t_i^{(j)}-t_i^{(\ell)}} \\
                -\frac{1}{2n} \sum_{\ell}\frac{1}{t_i^{(j)}-t_{i+1}^{(\ell)}}
                -\frac{\delta_{i\neq 1}}{2n} \sum_{\ell}\frac{1}{t_{i}^{(j)}-t_{i-1}^{(\ell)}}.
\end{multline}

\begin{equation}\label{eq:S2dz}
\frac{\partial S_{\mathbf m}}{\partial z_k}(t,z)=
                   \frac{n-1}{2n^2}\sum_{k\neq\ell}\frac{1}{z_k-z_{\ell}} 
                 - \frac{1}{2n}\sum_{j}\frac{1}{z_k-t_1^{(j)}}.
\end{equation}

\section{Schur-Weyl Duality} \label{SW}

To relate the Gaudin Hamiltonians to the $\theta_i$, it is necessary to recognize the irreducible $S_N$-modules as laying in $V^{\otimes N}$.  This is given via Schur-Weyl duality, whose presentation we borrow from \cite{Fulton:YoungTableaux} and \cite{FV:PolyKZ}.

If $V$ is a finite dimensional complex vector space, $V^{\otimes N}$ has natural actions of $\operatorname{GL}(V)$ and of $S_N$ which commute with each other, and which moreover form each other's centralizers when viewed as subalgebras of $\operatorname{End}_{\C}(V^{\otimes N})$.  Moreover, when viewed as a $\operatorname{GL}(V)\times S_N$ module, $V^{\otimes N}\cong \bigoplus_{\lambda}  M^{\lambda}\otimes W^{\lambda}$, where the $M^{\lambda}$ are inequivalent, irreducible polynomial $\operatorname{GL(V)}$-modules and the $W^{\lambda}$ are inequivalent irreducible $S_N$-modules.  We therefore have an isomorphism
\begin{equation}\label{eq:homrep} W^{\lambda}=\Hom_{\operatorname{GL}(V)}(M^{\lambda},V^{\otimes N}).\end{equation}  If $\dim V\geq N$, then all the irreducible representations of $S_N$ occur.  Otherwise, the irreducible representations which occur are the \emph{Specht modules} corresponding to partitions of $N$ into at most $\dim V$ parts.

Since the $\operatorname{GL}(V)$-modules occurring in the decomposition are all polynomial, we may view them as $\operatorname{SL}(V)$ modules.  For the ease of exposition, we will then view these as $\mathfrak{sl}(V)$-modules.

Given the decomposition $\sll(V)=\n_-\oplus \h \oplus \n$ into strictly lower triangular, strictly diagonal, and strictly upper triangular matrices, the simple, finite dimensional $\sll(V)$-modules are in correspondence with dominant, integral weights.  Indeed, if $\mu$ is a weight, then up to isomorphism, there is a unique simple module $M^{\mu}$ generated by a highest weight vector of weight $\mu$, and if $\mu$ is dominant and integral, $M^{\mu}$ is finite dimensional.  Moreover, every finite dimensional simple module occurs in this manner.  If we combine the isomorphism $\Hom_{\sll(V)}(M^{\mu},N)\cong N_{\mu}^{\n}$ with $(\ref{eq:homrep})$, we see that the irreducible representations of $S_N$ correspond to highest weight spaces of $V^{\otimes N}$.

It is well known that over $\C$, simple $S_N$ modules are in correspondence with partitions of $N$.  One construction of Specht modules which works well for our purposes is as follows.  To any partition $\lambda$ of $N=\lambda_1+\cdots + \lambda_k$, with $\lambda_1 \geq \lambda_2 \geq \cdots \geq \lambda_k\geq 1$, $\lambda_i\in \N$, we associate a \emph{Young Diagram}, an array of left aligned boxes where the top row has $\lambda_1$ boxes, the second row has $\lambda_2$ boxes, etc.  Denoting the boxes of the diagram by $Y(\lambda)$, a labeling of the diagram is a bijection $T:Y(\lambda)\to \{1, \ldots, N\}$.  There is a natural action of $S_N$ on the set of labellings of $Y(\lambda)$.  

Assume that $\dim(V)=n>k$, where $k$ is the number of parts of the partition $\lambda$.  Given our basis $\{v_1, \ldots, v_n\}$ of $V$, each labeling yields an element $e_T=v_{i_1}\otimes \cdots \otimes v_{i_N}\in V^{\otimes N}$ where $i_j$ is the row of $T^{-1}(j)$.  Note that $\sigma e_T=e_{\sigma T}$ for $\sigma\in S_N$.  We remark that the $e_T$ are in correspondence with what \cite{Fulton:YoungTableaux} refers to as \emph{tabloids}, equivalence classes of labellings up to equivalence of the contents of each row.  Now, we let $v_T=\sum_{\sigma\in C(T)} (-1)^{\sigma}\sigma e_T$ where the sum is over permutations which leave the contents of each column of $T$ fixed.  The $v_T$ span an irreducible representation corresponding to $\lambda$.  Moreover, each of the $v_T$ is a highest weight vector of weight $\sum_i \lambda_i L_i$.  Therefore, we have proved the following.

\begin{theorem}  The Specht module corresponding to the partition $\lambda$ is isomorphic to$\left(V^{\otimes N}_{\sum \lambda_i L_i}\right)^{\n}$.
\end{theorem}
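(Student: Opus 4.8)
The plan is to combine Schur--Weyl duality with the explicit polytabloid construction just recalled. Two things need to be checked. First, that $\left(V^{\otimes N}_{\sum\lambda_i L_i}\right)^{\n}$ is, as an $S_N$-module, irreducible and isomorphic to exactly one of the summands $W^{\mu}$ appearing in the Schur--Weyl decomposition $V^{\otimes N}\cong\bigoplus_{\mu}M^{\mu}\otimes W^{\mu}$ (the sum running over partitions $\mu$ of $N$ with at most $n$ parts); and second, that this summand is the Specht module attached to $\lambda$, which is where the vectors $v_T$ enter.

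For the first point I would take $\n$-invariants in the weight-$\sum\lambda_i L_i$ subspace of the Schur--Weyl decomposition, so that
\[
\left(V^{\otimes N}_{\sum\lambda_i L_i}\right)^{\n}\;\cong\;\bigoplus_{\mu}\left(M^{\mu}_{\sum\lambda_i L_i}\right)^{\n}\otimes W^{\mu}.
\]
Since each $M^{\mu}$ is finite dimensional and irreducible, $\left(M^{\mu}\right)^{\n}$ is exactly its one-dimensional highest weight space, so $\left(M^{\mu}_{\sum\lambda_i L_i}\right)^{\n}\neq 0$ forces the highest weight $\sum\mu_i L_i$ of $M^{\mu}$ to equal $\sum\lambda_i L_i$ in $\h^{*}=\C[L_1,\dots,L_n]/(\sum_i L_i=0)$. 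For partitions $\mu,\lambda$ of $N$ this forces $\mu=\lambda$: the relation says $\mu_i-\lambda_i$ is independent of $i$, and summing over $i$ shows this constant is $0$. Hence only the $\mu=\lambda$ term survives and $\left(V^{\otimes N}_{\sum\lambda_i L_i}\right)^{\n}\cong W^{\lambda}$ as an $S_N$-module. (Equivalently one may just quote the identifications $W^{\lambda}=\Hom_{\operatorname{GL}(V)}(M^{\lambda},V^{\otimes N})$ and $\Hom_{\sll(V)}(M^{\lambda},N)\cong N^{\n}_{\sum\lambda_i L_i}$ recorded earlier, after noting that polynomial $\operatorname{GL}(V)$-irreducibles that are homogeneous of degree $N$ restrict to pairwise non-isomorphic $\sll(V)$-modules — by the same weight computation — so that $\operatorname{GL}(V)$- and $\sll(V)$-homomorphisms between them agree.)

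For the second point, recall from the discussion above that the $v_T=\sum_{\sigma\in C(T)}(-1)^{\sigma}\sigma e_T$ span an $S_N$-submodule $U\subseteq V^{\otimes N}$ isomorphic to the Specht module of $\lambda$, and that each $v_T$ is a highest weight vector of weight $\sum_i\lambda_i L_i$; the latter places $U$ inside $\left(V^{\otimes N}_{\sum\lambda_i L_i}\right)^{\n}$. Since that ambient space is an irreducible $S_N$-module by the first point and $U\neq 0$, we conclude $U=\left(V^{\otimes N}_{\sum\lambda_i L_i}\right)^{\n}$, and the theorem follows. The substantive inputs — irreducibility of $U$ and its identification with the Specht module, together with the highest weight property of the $v_T$ — are already granted by the preceding paragraphs, so the only real work left inside this proof is the bookkeeping that isolates a single Schur--Weyl summand; the one place to tread carefully is the comparison of $\operatorname{GL}(V)$- and $\sll(V)$-module structure, which is harmless here precisely because distinct partitions of $N$ yield distinct $\sll(V)$-highest weights.
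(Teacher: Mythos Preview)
Your proof is correct and follows the same approach as the paper: the theorem is stated immediately after the discussion of Schur--Weyl duality, the identification $\Hom_{\sll(V)}(M^{\mu},N)\cong N_{\mu}^{\n}$, and the construction of the highest weight vectors $v_T$, with the paper simply declaring ``Therefore, we have proved the following.'' You have spelled out the bookkeeping more carefully than the paper does --- in particular the verification that distinct partitions of $N$ give distinct weights modulo $\sum L_i=0$ --- but the underlying argument is identical.
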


\section{Jucys-Murphy elements and the Young basis}\label{JM}

In this section, we briefly review another approach to the representation theory of $S_n$ over $\C$.  For more details on the material in this section, see \cite{OV:NewApproach}.

Let $[n]$ denote the $n$ element set $\{1,2, \ldots, n\}$.  From the inclusions $[1]\subset[2]\subset \cdots \subset [n]$, we have inclusions of groups $S_1\subset S_2 \subset \cdots \subset S_n$ where $S_k\subset S_n$ permutes $[k]$ and fixes all the elements of $[n]\backslash [k]$.  Using this chain of inclusions, we may study the representations of $S_n$ by making judicious use of the corresponding restriction functors, which allows us to build up an understanding inductively.  

A fundamental result which makes this perspective particularly fruitful is the \emph{branching rule} which states that if $M$ is a simple $\C[S_n]$-module, then the decomposition of $M$ into simple $\C[S_{n-1}]$ modules, no module occurs with multiplicity greater than $1$.  Phrased differently, $\operatorname{dim}_{\C}\operatorname{Hom}_{\C[S_{n-1}]}(N,\operatorname{Res}^{S_n}_{S_{n-1}}M)=0 \text{ or } \C$ whenever $N$ is a simple $S_{n-1}$ representation and $M$ is a simple $S_n$ representation.  Thus, each linear subspace of $N$ canonically determines a linear subspace of $M$.  Since there is a unique simple $\C[S_1]\cong \C$-module, and since it is one dimensional, we have that simple $\C[S_2]$-modules have a distinguished basis (up to rescaling), and by induction, so do simple $\C[S_n]$-modules.  We call such a basis a \emph{Young basis}.  Note that the elements of a Young basis of an irreducible representation $M$ are in correspondence with maximal chains of inclusions $k\subset M_2\subset M_3 \subset \cdots \subset M_n=M$ where $M_i$ is an irreducible $S_i$ representation.  Indeed, if $v$ is a basis element, then $M_i=S_i v$.  

The Young basis can be described in another way, as the joint eigenspaces of the action of the commutative algebra $GZ(n)=\sum_{i\leq n} Z(\C[S_i])\subset \C[S_n]$, the sum of the centers of the corresponding group algebras.  By (\cite{OV:NewApproach} Corollary 2.6), This algebra is generated by the \emph{Jucys-Murphy elements} $\Theta_i=\sum_{j<i} s_{ij}$.  Moreover, the simple representations of $S_n$ can be identified by their corresponding eigenvalues.

In particular, suppose that $\lambda$ is a partition of $n$ and that $T:Y(\lambda)\to [n]$ is a filling of of the Young tableau associated to $\lambda$.  We define the function $c(T,i)$ to be $y(T,i)-x(T,i)$ where $x(T,i)$ and $y(T,i)$ denote the respective row and column of $T$ which contain the entry $i$. The say the \emph{content} of $T$ is the vector $(c(T,1),c(T,2), \ldots, c(T,n))$. If $T$ is standard, that is, the entries in each row and column are increasing, then $T$ may be viewed as a chain of inclusions of Young diagrams, with the $i$th diagram being $T^{-1}([i])$.  For example, 

$$\begin{Young}
1 & 3 & 4 \cr
2 & 5 \cr
\end{Young}$$
corresponds to the inclusions
$$\yng(1)\subset \yng(1,1) \subset \yng(2,1)\subset \yng(3,1) \subset \yng (3,2)$$
where we view each inclusion as preserving the top left hand corner.

Because $\operatorname{Res}^{S_n}_{S_{n-1}}(W^{\lambda})=\bigoplus_{\lambda'}W^{\lambda'}$ where $\lambda'$ is obtained from $\lambda$ by deleting a box (see corollary 7.3.3 of \cite{Fulton:YoungTableaux}), we have that a standard labeling corresponds to an element of the Young basis for $W^{\lambda}$, and thus, each standard tableaux $T$ determines an element $w_T$ of the Young basis.

The following theorem is essentially contained in section 5 of \cite{OV:NewApproach}.

\begin{theorem}
With the notation above $\Theta_i(w_T)=c(T,i)w_T$, and the joint spectrum of the $\Theta_i$ acting on an irreducible representation $M$ determine $M$.  
\end{theorem}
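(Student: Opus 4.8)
The plan is to reduce the eigenvalue computation to a single classical scalar evaluation. First I would record that $\Theta_i\in GZ(n)$: the class sum of transpositions $X_k:=\sum_{1\le p<q\le k}s_{pq}=\sum_{i\le k}\Theta_i$ is central in $\C[S_k]$ (transpositions form one conjugacy class), so $\Theta_k=X_k-X_{k-1}\in Z(\C[S_k])+Z(\C[S_{k-1}])\subseteq GZ(n)$. Hence each $\Theta_i$ acts diagonally in the Young basis; write $\Theta_i w_T=a_i(T)w_T$. Since a standard $T$ corresponds to a chain $\lambda^{(1)}\subset\cdots\subset\lambda^{(n)}=\lambda$ with $\lambda^{(k)}=T^{-1}([k])$, the vector $w_T$ spans a line in the (multiplicity one) irreducible $S_k$-constituent $W^{\lambda^{(k)}}\subseteq\operatorname{Res}^{S_n}_{S_k}W^\lambda$, so the central element $X_k$ acts on $w_T$ by the scalar $b(\lambda^{(k)})$ by which it acts on all of $W^{\lambda^{(k)}}$, giving $\sum_{i\le k}a_i(T)=b(\lambda^{(k)})$.

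Next I would invoke the classical evaluation $b(\nu)=\sum_{x\in Y(\nu)}c(x)$ for every partition $\nu$: by character theory $b(\nu)=\binom{|\nu|}{2}\chi^\nu(s_{12})/\chi^\nu(\mathrm{id})$, and this equals the sum of the contents of the boxes of $\nu$, a standard identity obtainable from the Frobenius character formula (or found in \cite{OV:NewApproach}, \cite{Fulton:YoungTableaux}). Granting this,
\begin{equation*}
a_k(T)=b(\lambda^{(k)})-b(\lambda^{(k-1)})=\sum_{x\in Y(\lambda^{(k)})}c(x)-\sum_{x\in Y(\lambda^{(k-1)})}c(x)=c(T,k),
\end{equation*}
because $\lambda^{(k)}$ is $\lambda^{(k-1)}$ with the box $T^{-1}(k)$ adjoined and that box has content $c(T,k)$; the degenerate cases $k=1$, $\lambda^{(0)}=\varnothing$ are consistent since $\Theta_1=0$.

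For the second assertion I would show that a joint eigenvalue of the $\Theta_i$ on a simple module recovers its partition. A joint eigenvector is a Young-basis vector $w_T$ with eigenvalue tuple the content vector $(c(T,1),\dots,c(T,n))$, and from this tuple one reconstructs the chain of shapes inductively: $\lambda^{(1)}$ is the single box of content $0$, and $\lambda^{(k)}$ is obtained from $\lambda^{(k-1)}$ by adjoining the \emph{unique} addable box of content $c(T,k)$ — unique because the addable boxes of a fixed Young diagram have pairwise distinct contents (a short case check on the rows where a box is addable). Thus the content vector determines $T$, and in particular its shape $\lambda^{(n)}$, so the joint spectra of the $\Theta_i$ on non-isomorphic simple $S_n$-modules are disjoint and the joint spectrum determines the module.

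The only non-formal ingredient is the evaluation $b(\nu)=\sum c(x)$; together with the already-established fact that the Young basis is the joint eigenbasis of $GZ(n)$, it drives the whole argument, the reconstruction step being elementary. I expect that for a self-contained route one would instead follow \cite{OV:NewApproach}, deriving the admissible eigenvalue tuples directly from the relation $\Theta_{i+1}=s_{i,i+1}\Theta_i s_{i,i+1}+s_{i,i+1}$ and the branching rule, and that this combinatorial bookkeeping is the step requiring the most care.
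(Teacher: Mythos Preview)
The paper does not prove this theorem at all; it merely attributes it to \cite{OV:NewApproach}, Section~5. Your argument is correct and self-contained modulo the one classical input you flag, namely that the class-sum of transpositions $X_k$ acts on $W^{\nu}$ by the scalar $\sum_{x\in Y(\nu)}c(x)$. That identity is indeed obtainable from the Frobenius character formula (or from the Murnaghan--Nakayama rule at a single transposition) without any reference to Jucys--Murphy elements, so there is no circularity.

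Your route differs from the Okounkov--Vershik one you allude to at the end. They work in the reverse direction: starting only from the relation $\Theta_{i+1}=s_i\Theta_i s_i+s_i$ and the multiplicity-one branching, they classify all possible joint-spectrum tuples abstractly as ``content vectors,'' and only afterwards match these with partitions and standard tableaux. Your argument instead leans on the pre-existing bijection between Young-basis lines and standard tableaux (already recorded in the paper) together with the explicit character value $b(\nu)$, which makes the eigenvalue computation a one-line telescoping sum. The trade-off is that your proof imports an external character-theoretic fact, whereas the Okounkov--Vershik argument is internally self-contained but requires more combinatorial bookkeeping to pin down the admissible tuples; either approach is adequate for the paper's purposes, since this theorem is only background for the asymptotic analysis in Section~\ref{asymptotics}.
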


\section{A master function for $S_N$} \label{link}

We wish to establish a connection between the Gaudin hamiltonians and our $\theta_i$.  To do so, we will need the following calculation.

\begin{lemma}\label{lemma:symtolie}
If $\mathfrak{g}=\mathfrak{sl}_n$, then $\Omega_{12}$ acts on $V\otimes V$ via $\Omega_{12}(v_1\otimes v_2)=\frac{1}{2n}(\frac{-v_1\otimes v_2}{n} + v_2\otimes v_1)$, so that the action of $\Omega_{12}$ coincides with that of $\frac{1}{2n}(s_{12}-1/n)$.  Therefore, on $V^{\otimes N}$, $\Omega_{ij}=\frac{s_{ij}-1/n}{2n}$.
\end{lemma}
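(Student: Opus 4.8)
The plan is to compute the Casimir $\Omega$ explicitly for $\g = \sll_n$ acting on $V = \C^n$ and identify the resulting operator on $V \otimes V$ with the transposition. First I would write $\Omega = \sum_k g_k \otimes g_k'$ as an element of $\g \otimes \g \subset \operatorname{End}(V) \otimes \operatorname{End}(V)$, where the sum runs over a basis $\{g_k\}$ of $\sll_n$ and $\{g_k'\}$ is the dual basis with respect to the Killing form. A convenient basis is $\{e_{ij} : i \neq j\}$ together with a basis of the Cartan $\h$ (e.g.\ $\{e_{ii} - e_{i+1,i+1}\}$ or, more symmetrically, work in $\mathfrak{gl}_n$ and subtract the scalar part at the end). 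Using the formula $\langle \sum a_i L_i, \sum b_j L_j \rangle = \tfrac{1}{2n}(\sum_i a_i b_i - \tfrac{1}{n}(\sum a_i)(\sum b_i))$ recorded in Section~\ref{GH} — equivalently $\langle X, Y \rangle = \tfrac{1}{2n}\operatorname{Tr}(XY)$ on matrices, restricted to trace-zero — one finds that the dual of $e_{ij}$ (for $i \neq j$) is $2n\, e_{ji}$, and the Cartan part contributes the "diagonal" terms with the appropriate $-\tfrac{1}{n}$ trace correction. Concretely this gives
\begin{equation*}
\Omega = \frac{1}{2n}\left( \sum_{i,j} e_{ij} \otimes e_{ji} - \frac{1}{n} \sum_{i,j} e_{ii} \otimes e_{jj} \right) = \frac{1}{2n}\left( P - \frac{1}{n} \Id \otimes \Id \right),
\end{equation*}
where $P = \sum_{i,j} e_{ij} \otimes e_{ji}$ is the operator $v_1 \otimes v_2 \mapsto v_2 \otimes v_1$.

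The second step is to recognize that $P$ acting on $V \otimes V$ is exactly the permutation action of $s_{12} \in S_2$: indeed $P(v_a \otimes v_b) = \sum_{i,j}(e_{ij}v_a)\otimes(e_{ji}v_b) = \sum_{i,j}\delta_{ja}\delta_{ib}\, v_i \otimes v_j = v_b \otimes v_a$. Substituting, $\Omega_{12}(v_1 \otimes v_2) = \tfrac{1}{2n}\left( v_2 \otimes v_1 - \tfrac{1}{n} v_1 \otimes v_2 \right)$, which is the claimed formula and matches the action of $\tfrac{1}{2n}(s_{12} - \tfrac1n)$. The extension from $\Omega_{12}$ to a general $\Omega_{ij}$ on $V^{\otimes N}$ is immediate: $\Omega_{ij}$ acts in tensor slots $i$ and $j$ by the same formula and as the identity elsewhere, which is precisely how $s_{ij}$ (as a permutation of tensor factors) acts, so $\Omega_{ij} = \tfrac{1}{2n}(s_{ij} - \tfrac1n)$ on $V^{\otimes N}$.

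The only real subtlety — and the step I would be most careful about — is getting the normalization of the Killing form and hence the dual basis exactly right, since a wrong scalar would break the identification with the Dunkl/Jucys–Murphy picture downstream. To avoid sign and factor errors I would double-check by an independent route: $\Omega_{12}$ must act as a scalar on each $\sll_n$-isotypic component of $V \otimes V = \sym^2 V \oplus \Lambda^2 V$, and those scalars can be read off from the eigenvalues of the quadratic Casimir on the representations with highest weights $2L_1$ and $L_1 + L_2$ via the standard formula $\tfrac12(\langle \mu, \mu\rangle + 2\langle \mu, \rho\rangle)$. Checking that these agree with $+\tfrac{1}{2n}(1 - \tfrac1n)$ on $\sym^2 V$ (where $s_{12}$ acts by $+1$) and $-\tfrac{1}{2n}(1 + \tfrac1n)$ on $\Lambda^2 V$ (where $s_{12}$ acts by $-1$) confirms both the formula and the normalization, completing the proof.
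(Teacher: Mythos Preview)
Your proposal is correct and follows essentially the same route as the paper: compute $\Omega$ directly in the basis of root vectors $e_{ij}$ plus a Cartan basis, and identify the result with the flip operator. Your organization is a bit slicker---you package the off-diagonal and diagonal parts together as $\sum_{i,j} e_{ij}\otimes e_{ji}=P$ and handle the Cartan via the ``$\mathfrak{gl}_n$ minus center'' trick, whereas the paper computes the Cartan contribution from the explicit dual basis $h_i'=\tfrac{1}{2n}(e_{ii}-\tfrac{1}{n}\sum_j e_{jj})$ and then checks the action on $v_k\otimes v_\ell$ case by case. Your added sanity check on $\sym^2 V\oplus\Lambda^2 V$ is not in the paper but is a nice confirmation.

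One concrete slip to fix: you write $\langle X,Y\rangle=\tfrac{1}{2n}\tr(XY)$ and ``the dual of $e_{ij}$ is $2n\,e_{ji}$.'' The formula $\tfrac{1}{2n}(\sum a_ib_i-\cdots)$ you quote is the induced form on $\h^*$, which is the \emph{inverse} of the restriction to $\h$; the Killing form on $\sll_n$ itself is $\langle X,Y\rangle=2n\,\tr(XY)$, so the dual of $e_{ij}$ is $\tfrac{1}{2n}e_{ji}$ (as the paper states). With your stated dual basis the off-diagonal part of $\Omega$ would come out as $2n\sum_{i\neq j}e_{ij}\otimes e_{ji}$, not $\tfrac{1}{2n}\sum$, contradicting your own (correct) displayed formula for $\Omega$. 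You anticipated exactly this hazard, and your Casimir-eigenvalue check would indeed catch it; just be sure the intermediate normalizations are made consistent with the final answer.
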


\begin{proof}
Let $e_1, \ldots, e_n$ be the standard basis for $\C^n$.  The standard decomposition $\mathfrak{sl}_n=\mathfrak{n}_- \oplus \mathfrak{h} \oplus \mathfrak{n}$ into strictly lower triangular, strictly diagonal, and strictly upper triangular matrices, then the $e_{ij}, i<j$ is a basis of root vectors for $\n$, $e_{ij}, i>j$ is a basis of root vectors for $\n_-$, and $\mathfrak{h}=\operatorname{span}(h_i)$ where $h_i=e_{ii}-e_{n,n}$.  Moreover, with respect to the Killing form, $e_{ij}$ and $\frac{1}{2n}e_{ji}$ are dual to each other when $i\neq j$.  

Since $(e_{ij}\otimes e_{ji}) (e_k\otimes e_{\ell})=e_i\otimes e_j$ if $i=\ell$ and $j=k$, and $0$ otherwise, we have that $\left(\sum_{i\neq j} e_{ij}\otimes e_{ji}\right)(e_k\otimes e_{\ell})=e_{\ell}\otimes e_k$ if $k\neq \ell$ and $0$ if $k=\ell$.  To calculate the contribution of $\Omega_{12}$ coming from $\h$, we must calculate the duals  of the $h_i$.  If $h=\sum a_i e_ii, h'=\sum b_i e_ii$ where $\sum a_i=\sum b_i=0$, then $\langle h,h' \rangle =2n\sum a_i b_i$.  Thus, given $h_i$ as a basis for $\h$, we have a corresponding dual basis given by $h_i'=\frac{1}{2n}(e_{ii}-\frac{1}{n}\sum e_{jj})$.  Then $\sum h_i\otimes h_i'$ acts on $v_k \otimes v_{\ell}$ as multiplication by 
\begin{equation*}
\frac{1}{2n}\sum_{i=1}^{n-1} (\delta_{ki}-\delta_{kn})(\delta_{\ell i}-\frac{1}{n})=
\begin{cases}
\frac{1-1/n}{2n} \quad & k=\ell \\
\frac{-1/n}{2n}        & k\neq \ell
\end{cases}.
\end{equation*}  Combining the two contributions to $\Omega_{12}$, yields the lemma.
\end{proof}

Using the lemma, we see that, on the space $V^{\otimes n}$, $\theta_i=2n \sum_j\frac{\Omega_{ij}}{x_i-x_j}+\sum_j\frac{1}{n(x_i-x_j)}$.  Therefore, the eigenvectors of $\theta_i$ are equal to those of $\sum_j \frac{\Omega_{ij}}{x_i-x_j}$, and computing the corresponding eigenvalue for one operator automatically gives the eigenvalue for the other.

Let $V=\C^n$, and let $\lambda$ be a partition of $N$.  Then, modulo the relation in $\h^*$ that $\sum L_i=0$, we have that $\sum \lambda_i L_i = NL_1-\sum_i \alpha_i \sum_{j> i} \lambda_j$.  We may combine these observations with Theorem \ref{theorem:master} to obtain the following result.

\begin{theorem}
Let $\lambda$ be a partition of $N$, let $m_i=\sum_{j>i} \lambda_j$, and let $\theta_i$ act on the Specht modules $W^{\lambda}$.  If $S(t,z)$ is the function from $\ref{eq:S2}$, and if $t(z)$ is a non degenerate solution to the system of equations $\frac{\partial S_{\mathbf m}}{\partial t_i^{(j)}}(t,z)=0$, then there is a common eigenvector for the action of the $\theta_i$, with eigenvalues $2n\frac{\partial S_{\mathbf m}}{\partial z_i}(t(z),z)+\sum_{j\neq i}\frac{1}{n(z_i-z_j)}.$  
\end{theorem}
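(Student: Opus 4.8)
The plan is to combine Lemma~\ref{lemma:symtolie}, Theorem~\ref{theorem:master}, and the Schur--Weyl identification of $W^\lambda$ with $\left(V^{\otimes N}_{\sum\lambda_i L_i}\right)^{\n}$ established in Section~\ref{SW}. First I would record the weight bookkeeping: with $V=\C^n$ and $V_i=\C^n$ for all $i$, each $\Lambda_i=L_1$, so $\Lambda=NL_1$, and the identity $\sum\lambda_i L_i = NL_1 - \sum_i \alpha_i \sum_{j>i}\lambda_j$ (valid modulo $\sum L_i=0$) shows that the weight space $V_{\sum\lambda_i L_i}$ is exactly the weight space $V_{\mathbf m}$ for $\mathbf m = (m_1,\dots,m_{n-1})$ with $m_i = \sum_{j>i}\lambda_j$. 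Thus $\abs{\mathbf m} = \sum_i \sum_{j>i}\lambda_j$, the master function $S_{\mathbf m}$ of (\ref{eq:S2}) is the relevant one, and its highest-weight space $V_{\mathbf m}^{\n}$ is canonically $W^\lambda$.

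Next I would feed the hypothesis into Theorem~\ref{theorem:master}: since $t(z)$ is a nondegenerate family of critical points of $S_{\mathbf m}$ (for $\g=\sll_n$, $V_i=\C^n$), there is a function $\Phi$ with $\Phi(t,z)\in V_{\mathbf m}^{\n} = W^\lambda$ such that $\Phi(t(z),z)$ is an eigenvector of the Gaudin Hamiltonians $\tilde\theta_i = \sum_{j\neq i}\frac{\Omega_{ij}}{z_i-z_j}$ with eigenvalue $\frac{\partial S_{\mathbf m}}{\partial z_i}(t(z),z)$. I would then invoke Lemma~\ref{lemma:symtolie}, which gives $\Omega_{ij} = \frac{s_{ij}-1/n}{2n}$ on $V^{\otimes N}$, hence the operator identity
\begin{equation*}
\theta_i = \sum_{j\neq i}\frac{s_{ij}}{z_i-z_j} = 2n\sum_{j\neq i}\frac{\Omega_{ij}}{z_i-z_j} + \sum_{j\neq i}\frac{1}{n(z_i-z_j)} = 2n\,\tilde\theta_i + \sum_{j\neq i}\frac{1}{n(z_i-z_j)}\cdot\Id.
\end{equation*}
Since the correction term is a scalar, the same vector $\Phi(t(z),z)$ is an eigenvector of $\theta_i$, with eigenvalue obtained by rescaling and shifting: $2n\,\frac{\partial S_{\mathbf m}}{\partial z_i}(t(z),z) + \sum_{j\neq i}\frac{1}{n(z_i-z_j)}$. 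One must also note that the $S_N$-action (under which $W^\lambda$ sits inside $V^{\otimes N}$) commutes with $\sll(V)$, so acting by $\theta_i\in\C[S_N]$ preserves the highest-weight space and the restriction to $W^\lambda$ is legitimate; and that the eigenvector is nonzero because $t(z)$ is nondegenerate.

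There is essentially no hard obstacle here — the statement is a direct transport of Theorem~\ref{theorem:master} across the dictionary set up in Sections~\ref{GH}--\ref{link}. The one point requiring a little care is the consistency of conventions: I must make sure the weight $\sum\lambda_i L_i$ of the Specht module really is $\Lambda_{\mathbf m}$ in the sense of Section~\ref{GH} (i.e.\ that the $\alpha_i$-expansion has nonnegative coefficients $m_i$, which holds since $\lambda$ is a partition), and that the domain of $S_{\mathbf m}$ in (\ref{eq:S2}) matches the coordinates $t_i^{(j)}$ used in (\ref{eq:S2dt})--(\ref{eq:S2dz}); with $V_i=\C^n$ only $t_1^{(j)}$ couples to the $z_k$, consistent with $\langle L_1,\alpha_i\rangle = \tfrac{1}{2n}\delta_{1i}$. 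Granting these bookkeeping checks, the proof is three lines.
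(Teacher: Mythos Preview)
Your proposal is correct and follows essentially the same route as the paper: the paper derives the theorem directly from Lemma~\ref{lemma:symtolie} (giving $\theta_i = 2n\tilde\theta_i + \sum_{j\neq i}\frac{1}{n(z_i-z_j)}$), the weight identity $\sum\lambda_i L_i = NL_1 - \sum_i\alpha_i\sum_{j>i}\lambda_j$, and Theorem~\ref{theorem:master}. Your write-up is in fact more careful than the paper's, as you explicitly note that the $S_N$-action preserves the highest-weight space and that nondegeneracy guarantees the Bethe vector is nonzero.
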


We remark that the critical values and eigenvalues above are the critical values and partial derivatives of 

\begin{multline}\label{eq:S3}
S'_{\mathbf m}(t,z) ={\sum_{k<\ell}\log\left(z_k-z_{\ell}\right)} 
                - \sum_{k,j}\log\left(z_k-t_1^{(j)}\right)  \\
                + 2\sum_i \sum_{j<\ell} \log\left(t_i^{(j)}-t_i^{(\ell)}\right) 
                - \sum_{i<n-1}\sum_{j,\ell}\log\left(t_i^{(j)}-t_{i+1}^{(\ell)}\right).
\end{multline}

Thus, we see that all reference to $n$ drop out, and we have a method for finding the eigenvalues of the action of $\theta_i$ which depend only on our choice of partition $\lambda$.

\section{Asymptotic behavior} \label{asymptotics}

In order to analyze the action of the $\theta_i$ and to show that generically, the Specht modules decompose into joint eigenspaces, we consider taking the limit of the operators as the $z_i$'s tend towards infinity.  

\subsection{Symmetries}

There are natural actions of $\C$, $\C^*$ and $S_n$ on $\C^{n}\backslash \overline{\Delta}$.  The $\theta_i$ are left fixed under the action $z_i\mapsto z_i+c$, they scale by $1/c$ under the action $z_i\mapsto cz_i$, and are permuted under the action $z_i \mapsto z_{\sigma(i)}$.  Therefore, there is no loss of generality in assuming that $\abs{z_1}\leq\abs{z_2}\leq \abs{z_3}\leq \cdots \leq \abs{z_n}$.  Additionally, if we wished we could fix $z_1=0, z_2=1$, though we shall not.


\subsection{Asymptotic zones}

If we wish to take a limit of the $\theta_i$ as the $z_j$ approach infinity, the limit depends on how the $z_j$ approach infinity relative to each other.  We will, in particular, consider values in the asymptotic zone $\abs{z_1}<<\abs{z_2}<<\cdots << \abs{z_n}$, that is, asymptotic solutions where $\abs{z_i/z_j}\to \infty$ when $i>j$.  In this region, $\frac{1}{z_i-z_j}\sim \frac{1}{z_i}$ if $i>j$, and so $\theta_i=\sum_{j\neq i} \frac{s_{ij}}{z_i-z_j}\sim \sum_{j<i} \frac{s_{ij}}{z_i}-\sum_{j>i} \frac{s_{ij}}{z_j}= \frac{1}{z_i}\Theta_i+o(1/z_i)$ where the $\Theta_i$ are the Jucys-Murphy elements.

To show that the $\theta_i$ act semi-simply for generic $z_i$, we will show that for each standard tableau $T$, there is a corresponding family of critical points whose asymptotic eigenvalues correspond to the eigenvalues of $w_T$.

Let $\lambda$ be a partition of $N$, and let $T$ be a standard tableau of shape $\lambda$.  We will reindex our variables $t_i^{(j)}$ with the boxes of the Young tableau as follows.  Let $Y_i(\lambda)$ denote the boxes of $Y(\lambda)$ which do not lie in the first $i$ rows.  We note that there are $\abs{Y_i(\lambda)}$ variables of the form $t_i^{(j)}$.  Thus, picking any such bijection, we see that given a box $b$ in row $k$ of tableau, we have variables $t_i^{(b)}$ whenever $i<k$.  We will abuse this notation by letting $t_i^{(j)}$ denote $t_i^{(T^{-1}(j))}$ when $j$ is an integer.  

\begin{theorem}\label{thm:critical}
Given a filled standard tableau $T$, there is a critical point of $S$ and positive rational constants $0<\beta_i^{b}<1$ such that, in the asymptotic zone $\abs{z_1}<<\abs{z_2}<<\cdots << \abs{z_n}$, $t_i^{(j)} = \beta_i^{(j)} z_{T(j)}+o(z_{T(j)})$.  At this critical point, $\frac{\partial S}{\partial z_j}= (y(T,j)-x(T,j))z_j+O(1/z_j)$.  Moreover, every critical point which is defined over the asymptotic region is of this form.
\end{theorem}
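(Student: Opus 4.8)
The plan is to prove the existence statement and the form of the critical points by induction on $N$, matching the inductive structure of standard tableaux (deleting the box containing $N$ from $T$ gives a standard tableau $T'$ of size $N-1$). First I would set up the asymptotic ansatz: in the zone $\abs{z_1}\ll\cdots\ll\abs{z_n}$ substitute $t_i^{(b)} = \beta_i^{(b)} z_{T(b)}(1+\varepsilon_i^{(b)})$ with $\varepsilon_i^{(b)}\to 0$, and $z_k = \zeta_k z_{k+1}$ with $\zeta_k\to 0$, into the critical point equations $\partial S_{\mathbf m}/\partial t_i^{(j)} = 0$ from $(\ref{eq:S2dt})$ (equivalently $(\ref{eq:S3})$, since the $n$-dependence drops out). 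The key observation driving the combinatorics is that a variable $t_i^{(b)}$ attached to the box $b$ in row $k$ (so $i<k$) is asymptotically comparable to $z_{T(b)}$, and two variables interact at leading order only when their associated $z$-indices coincide or are adjacent in the tableau's filling order; variables attached to smaller $z$-indices are negligible against those attached to larger ones. Collecting leading-order terms in each equation yields, for each box $b$ in row $k$ and each level $i<k$, a limiting algebraic equation in the finitely many $\beta$'s whose $z$-index equals $T(b)$ or its tableau-neighbors. I would show this limiting system is precisely the critical point system for a smaller master function — the one associated to the subtableau $T'=T^{-1}([N-1])$ together with one extra box — so that by the inductive hypothesis it has a solution with all $\beta_i^{(b)}\in(0,1)$, positive and rational, and moreover (using nondegeneracy, Theorem 4.3 / the second theorem of §2) this solution is a nondegenerate root of the limiting system.

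Next I would invoke the implicit function theorem / a Newton–Puiseux argument: since the limiting system is nondegenerate at the constructed $\beta$-solution, it perturbs uniquely to an honest solution of the full system $\partial S_{\mathbf m}/\partial t_i^{(j)}=0$ for $z$ in the asymptotic zone, with the $\varepsilon_i^{(b)}$ expanded as convergent series in the small parameters $\zeta_k = z_k/z_{k+1}$. This gives the asserted expansion $t_i^{(j)} = \beta_i^{(j)} z_{T(j)} + o(z_{T(j)})$ and, by the same implicit function theorem, nondegeneracy of the resulting critical point of $S$. Substituting this expansion into $(\ref{eq:S2dz})$ (rescaled as in $(\ref{eq:S3})$) and keeping the leading term, the only surviving contribution to $\partial S/\partial z_j$ at order $z_j$ comes from the self-interaction and the interactions with variables sharing the index $j$, and a bookkeeping computation identifies the coefficient as the number of boxes weakly left of and above box $j$ minus the number weakly above and to the left in the complementary sense — i.e. exactly $y(T,j)-x(T,j) = c(T,j)$, recovering the Jucys–Murphy eigenvalue of $w_T$ as promised. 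The "moreover" clause — that every critical point defined over the asymptotic region arises this way — follows because any such family must have each $t_i^{(j)}$ bounded by some power of the dominant $z$'s, hence after rescaling its leading coefficients solve the limiting system; an argument that no coefficient can vanish or be $\geq 1$ (using the sign/convexity structure of the logarithmic terms, as in the real Bethe ansatz for $\mathfrak{gl}_n$) plus uniqueness of nondegenerate solutions of the limiting system pins it down to one of the $\beta$-data coming from a standard tableau.

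The main obstacle I expect is the careful identification of the limiting algebraic system with a genuine "smaller" master-function critical system — i.e. making the induction actually close. One has to be precise about which terms in $(\ref{eq:S2dt})$ survive the limit for each $(i,b)$: variables $t_{i'}^{(b')}$ with $T(b')<T(b)$ contribute terms of order $1/z_{T(b)}$ (negligible, or rather fixed constants after rescaling), those with $T(b')>T(b)$ contribute $1/z_{T(b')}=o(1/z_{T(b)})$, and the delicate case is $T(b')=T(b)$ and the tableau-adjacent indices, where the interaction is genuinely at leading order. Getting the combinatorics of "which $\beta$'s talk to which" to line up with the shape of the subtableau — and in particular verifying that the constants $\beta_i^{(b)}$ one is forced into really do lie strictly between $0$ and $1$ and are rational — is where the real work lies; this is deferred to Section~\ref{sec:proof}.
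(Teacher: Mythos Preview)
Your inductive strategy---peel off the box containing $N$, solve the limiting system, and perturb via the implicit function theorem---is the paper's approach, and the perturbation step you sketch is exactly what Theorem~\ref{thm:build} (quoted from \cite{RV:QuasiKZ}) packages as a black box. The gap is in how the induction closes. You describe the limiting system as ``the critical point system for the subtableau $T'$ together with one extra box'' and appeal to the inductive hypothesis, but the inductive hypothesis only handles the variables attached to boxes of $T'$; it says nothing about the $k-1$ \emph{new} variables $t_1^{(b)},\ldots,t_{k-1}^{(b)}$ attached to the new box $b$ in row $k$. The paper's key observation is that in the limit these new variables decouple from the old ones and satisfy the \emph{two-point} Bethe system~\eqref{eq:Tdt} with $\Lambda_1'=\sum_j\lambda_j^{(i)}L_j$ and $\Lambda_2'=L_1$. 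That system is a tridiagonal recursion solved explicitly by Lemma~\ref{lemma:Scalc}: $s_i=\prod_{j=1}^i\bigl(1-1/\sum_{m\geq j}(1+a_m)\bigr)$ with $a_j=\lambda_j^{(i)}-\lambda_{j+1}^{(i)}$. This closed form simultaneously delivers existence, uniqueness, rationality, and the bound $0<s_i<1$---precisely the points you flag as ``where the real work lies'' and defer---and yields $s_1=1-1/\bigl(\lambda_1^{(i)}+x(T,i+1)-y(T,i+1)\bigr)$, from which the eigenvalue formula drops out in one line rather than via the bookkeeping you describe.

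For the ``moreover'' clause the paper does not use any sign or convexity argument. It simply counts: the construction produces one nondegenerate critical point per standard tableau, hence $\dim W^{\lambda}$ of them, and since the associated Bethe vectors are linearly independent by the theorem of \cite{MTV:critical} quoted in Section~\ref{GH}, these must exhaust all critical points over the asymptotic region. Your convexity route may well work, but it is not needed once the explicit count is available.
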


\begin{corollary}
Generically, the $\theta_i$ act semi-simply.
\end{corollary}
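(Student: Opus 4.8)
The plan is to produce, at one carefully chosen parameter value $z_0$, a basis of the Specht module $W^{\lambda}$ consisting of joint eigenvectors of the $\theta_i$, and then to spread semisimplicity from $z_0$ to a Zariski-dense open subset of $\C^N\setminus\overline{\Delta}$ by an elementary discriminant argument. Fix a partition $\lambda\vdash N$, realize $W^{\lambda}\cong V_{\mathbf m}^{\n}$ as in Section~\ref{link} with $V=\C^n$ ($n$ at least the number of parts of $\lambda$) and $m_i=\sum_{j>i}\lambda_j$, and set $d=\dim W^{\lambda}$, which equals the number of standard Young tableaux of shape $\lambda$. Since $\sum\lambda_iL_i$ is dominant and integral, the linear independence theorem of \cite{MTV:critical} quoted above applies to $S_{\mathbf m}$ at every fixed $z$.

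First I would take $z_0$ deep in the asymptotic zone $\abs{z_1}\ll\cdots\ll\abs{z_N}$, deep enough for Theorem~\ref{thm:critical} to be in force. By that theorem the critical points of $S$ at $z_0$ are exactly the $d$ points $t^{T}(z_0)$ indexed by standard tableaux $T$ of shape $\lambda$ (for each $T$ there is one with the stated asymptotics $t_i^{(j)}=\beta_i^{(j)}z_{T(j)}+o(z_{T(j)})$, these are pairwise distinct because for different $T$ the leading terms involve different $z$'s, and every critical point over the asymptotic region is of this form). By \cite{MTV:critical} the associated Bethe vectors are linearly independent in the $d$-dimensional space $V_{\mathbf m}^{\n}=W^{\lambda}$, hence form a basis; in particular they span $V_{\mathbf m}^{\n}$, so, again by \cite{MTV:critical}, none of the $d$ critical points is degenerate. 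Theorem~\ref{theorem:master} therefore applies to each of them and shows that these basis vectors are common eigenvectors of the Gaudin Hamiltonians $\sum_{j}\Omega_{ij}/(z_i-z_j)$, hence of the $\theta_i$ (Lemma~\ref{lemma:symtolie}). Thus the commuting operators $\theta_i$ are simultaneously diagonalizable on $W^{\lambda}$ at $z=z_0$, i.e.\ act semisimply there.

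To pass to generic $z$, recall from Section~\ref{asymptotics} that $z_i\theta_i=\Theta_i+o(1)$ as $z\to\infty$ in the asymptotic zone and that the $T$-Bethe vector approaches the Jucys-Murphy eigenvector $w_T$; hence the joint $\theta_i$-eigenvalue tuple of the $T$-Bethe vector at $z_0$ is $\bigl(z_{0,i}^{-1}c(T,i)+o(z_{0,i}^{-1})\bigr)_i$, so for $z_0$ deep enough these tuples are pairwise distinct, since distinct standard tableaux have distinct content vectors $\bigl(c(T,i)\bigr)_{i=1}^{N}$ (the addable boxes of a Young diagram lie on distinct diagonals, so the content vector recovers $T$). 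Pick $b\in\C^N$ avoiding the finitely many hyperplanes on which two of those tuples give the same value of $v\mapsto\sum_i b_iv_i$; then $\Theta_b(z):=\sum_i b_i\theta_i(z)$ has $d$ distinct eigenvalues at $z=z_0$. The matrix entries of the $\theta_i$ in a fixed basis of $W^{\lambda}$ are rational functions of $z$ regular on $\C^N\setminus\overline{\Delta}$, so the discriminant $\delta_{\lambda}(z)=\operatorname{disc}_x\det\bigl(x\,\Id-\Theta_b(z)|_{W^{\lambda}}\bigr)$ is rational and regular there, and $\delta_{\lambda}(z_0)\neq0$; hence $\delta_{\lambda}$ is not identically zero and its zero set $Z_{\lambda}$ is a proper Zariski-closed subset of $\C^N\setminus\overline{\Delta}$. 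For $z\notin Z_{\lambda}$ the operator $\Theta_b(z)$ has $d=\dim W^{\lambda}$ distinct eigenvalues, hence one-dimensional eigenspaces; each $\theta_i(z)$ commutes with $\Theta_b(z)$, preserves these lines, and acts on each by a scalar, so the $\theta_i$ are simultaneously diagonalizable on $W^{\lambda}$. Taking $\bigcup_{\lambda\vdash N}Z_{\lambda}$, a finite union of proper Zariski-closed sets, gives the claim: off this set the $\theta_i$ act semisimply on every Specht module, i.e.\ generically.

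The only step carrying real content is the enumeration imported from Theorem~\ref{thm:critical} — that the critical points over the asymptotic region are exactly the $d$ coming from standard tableaux, with controlled leading asymptotics — since this is what forces the count to be exactly $d$ and, via the spanning half of \cite{MTV:critical}, forces non-degeneracy; everything else (the content-vector combinatorics, the identification of the eigenvectors of Theorem~\ref{theorem:master} with the linearly independent vectors of \cite{MTV:critical}, and the discriminant spreading) is routine. If one wished to bypass Theorem~\ref{thm:critical} entirely, the semisimple point $z_0$ can be produced directly: $\sum_i c_iz_{0,i}\theta_i(z_0)$ is, for generic $c$ and $z_0$ deep in the asymptotic zone, a small perturbation of the regular semisimple operator $\sum_i c_i\Theta_i$ on $W^{\lambda}$, hence has $d$ distinct eigenvalues, which already forces simultaneous diagonalizability at $z_0$; the discriminant argument then finishes identically.
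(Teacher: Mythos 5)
Your proof is correct, and it overlaps with the paper's argument in the decisive step but diverges in how it spreads the conclusion. Like the paper, you use Theorem~\ref{thm:critical} together with the linear-independence/spanning criterion of \cite{MTV:critical} to produce a point $z_0$ in the asymptotic zone where the $d=\dim W^{\lambda}$ Bethe vectors form an eigenbasis, so the $\theta_i$ are simultaneously diagonalizable there. The paper then spreads this on the side of the critical variety: with $p:X\to Y$ the projection from the critical locus, the restriction of $p$ to the nondegenerate locus is \'etale and dominant, so generically there are $d$ critical points and the Bethe vectors form a basis generically. You instead spread on the operator side via the discriminant of a generic linear combination $\Theta_b(z)$ — which is essentially the open-condition argument of the paper's appendix, and indeed the paper's proof opens by remarking that the corollary already follows from the Jucys--Murphy spectrum plus that genericity statement; your closing paragraph is exactly that remark made precise. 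The trade-off: your spreading step is more elementary and certifies semisimplicity directly, but it says nothing about the Bethe vectors away from $z_0$; the paper's route additionally shows the Bethe vectors span for generic $z$ and locates the failure of semisimplicity at the degeneration or collision of critical points, which is the extra information the author cites as the reason for preferring that argument. One caveat common to both versions: ``$z_0$ deep enough in the asymptotic zone for Theorem~\ref{thm:critical} to be in force'' requires the asymptotic enumeration of critical points to hold at an actual parameter value rather than only in the limit; your fallback construction of $z_0$ as a small perturbation of the regular semisimple operator $\sum_i c_i\Theta_i$ sidesteps this cleanly and would make the writeup airtight.
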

\begin{proof}
While this follows from the fact that the action of the Jucys-Murphy elements on $W^{\lambda}$ has joint spectrum of size $\dim(W^{\lambda})$ and the fact that this is a generic condition (see the appendix), it is useful to prove the statement by analyzing the critical points master function, as such an analysis allows us to see where semi-simplicity fails.

Let $X$ and $X'$ respectively denote the collection of critical points and degenerate critical points of $S_{\lambda}$, let $Y=\C^n\backslash \overline{\Delta}$, and let $p:X\to Y$ be the projection.  By Theorem 7.1 of \cite{MTV:critical}, the Bethe vectors corresponding to the different critical points in each fiber are linearly independent and cannot span if any of the critical points are degenerate.  We have by \ref{thm:critical} that the Bethe vectors span $W^{\lambda}$ in the asymptotic zone, and so it suffices to show that generically, we have $\dim(W^{\lambda})$ critical points.  This follows from the fact that the restriction $p\vert_{X\backslash X'}$ is \'etale and dominant.  
\end{proof}

\begin{remark}
If the projection $p:X\to Y$ is a finite morphism, then the proof can be strengthened to show that, for a fixed $z$, the Bethe vectors form a basis whenever none of the critical points are degenerate.
\end{remark}

\section{Proof of Theorem \ref{thm:critical}} \label{sec:proof}

With $S_{\mathbf{m}}$ as in \ref{eq:S}, if $N=2$, then we can eliminate the appearance of $z_1$ and $z_2$ from $\frac{\partial S_{\mathbf{m}}}{\partial t_i^{(j)}}$ by making the variable substitution $s_i^{(j)}=\frac{t_i^{(j)}-z_1}{z_1-z_2}$.  In terms of these new variables, being a $t$-critical point is equivalent to
\begin{equation}\label{eq:Tdt}
\frac{\langle \Lambda_1,\alpha_i \rangle}{s_i^{(j)}} +\frac{\langle \Lambda_2,\alpha_i \rangle}{s_i^{(j)}-1} = \sum_{(i,j)\neq (k,\ell)} \frac{\langle \alpha_i, \alpha_{k} \rangle}{s_i^{(j)}-s_k^{(\ell)}}.
\end{equation}

Solutions to this equation allow us to asymptotically build up solutions to $S$.  Let $\Lambda_1, \ldots \Lambda_{\ell}, \Lambda_{\ell+1}, \ldots \Lambda_{\ell+k}$ be weights, and let $a_i, b_i, c_i\in Z_+$, with $i\leq 1 \leq r$.  We then denote $\Lambda_1^{\prime}=\sum_{i=1}^{\ell} \Lambda_i - \sum a_i \alpha_i$ and $\Lambda_2^{\prime}=\sum_{i=\ell+1}^{N} \Lambda_i - \sum b_i \alpha_i$.  Then if $t(z)$ and $t'(z)$ are non-degenerate critical points of $\ref{eq:S}$ of weights $\Lambda_1^{\prime}$ and $\Lambda_2^{\prime}$ respectively, then they can be combined into a solution of $\ref{eq:S}$ of weight $\sum_{i=1}^{\ell+k}\Lambda_i-\sum{(a_i+b_i+c_i)\alpha_i}$ by using solutions to $\ref{eq:Tdt}$ of weight $(\Lambda_1^{\prime}+\Lambda_2^{\prime})-\sum c_i \alpha_i.$  In particularly, assume that $s=\{s_i^{(j)}\mid (1,1)\leq (i,j)\leq (r,c_i)\}$ is a solution to \ref{eq:Tdt} of weight $(\Lambda_1^{\prime}+\Lambda_2^{\prime})-\sum c_i \alpha_i.$  Then we have the following theorem.

\begin{theorem}[\cite{RV:QuasiKZ} Theorem 6.1]\label{thm:build}
In the notation above, if there is a unique critical point of \ref{eq:S} of weight $\sum_{i=1}^{\ell+k} \Lambda_i -\sum (a_i + b_i + c_i)\alpha_i$ which is asymptotically of the form $(t(z_1, \ldots, z_\ell)+O(1/{z_{\ell+1}}),z_{\ell+1}s+O(1),t'(z_{\ell+1}-z_{\ell+1},z_{\ell+2}-z_{\ell+1}, \ldots, z_{\ell+k}-z_{\ell+1})+O(1/{z_{\ell+1}}))$.
\end{theorem}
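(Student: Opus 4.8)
The plan is to reconstruct the argument of \cite{RV:QuasiKZ}~Theorem~6.1 in our normalization: exhibit the claimed glued configuration as the unique critical point of $S_{\mathbf m}$ (i.e.\ solution of the system $\partial S_{\mathbf m}/\partial t_i^{(j)}=0$, cf.\ \ref{eq:S2dt}) that has the prescribed three-block asymptotic shape, and then check that it is non-degenerate. The mechanism is a rescaling followed by the implicit function theorem.

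First I would introduce the large parameter. In the relevant zone the points $z_1,\dots,z_\ell$ are dominated by $z_{\ell+1},\dots,z_{\ell+k}$, the scale $w:=z_{\ell+1}$ is the largest, and the differences $z_{\ell+j}-z_{\ell+1}$ for $j\ge 2$ are of lower order than $w$; put $\varepsilon:=1/w$. I would split the $t$-variables into the three groups suggested by the ansatz --- the ``first-cluster'' variables attached to $z_1,\dots,z_\ell$, the ``connecting'' variables, and the ``second-cluster'' variables attached to $z_{\ell+1},\dots,z_{\ell+k}$ --- and rescale each accordingly: the first-cluster variables should be $O(1)$ and track $t(z_1,\dots,z_\ell)$; the connecting variables are written as $w\sigma$ with $\sigma=s+O(\varepsilon)$; and the second-cluster variables, after translating by $z_{\ell+1}$, track $t'$ evaluated at the translated points. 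Substituting this ansatz into the critical point equations and expanding in $\varepsilon$ is the heart of the matter.

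The key claim to verify is that at leading order the three blocks decouple. The equations indexed by first-cluster variables should reduce to the critical-point equations of $S$ for the sub-data $(\Lambda_1,\dots,\Lambda_\ell;\mathbf a)$, the coupling to the other two groups carrying a factor of $\varepsilon$ because those variables --- and the $z_k$ they sit near --- are much larger. The equations indexed by connecting variables should reduce, after clearing the overall power of $w$, exactly to the two-point equation \ref{eq:Tdt} with $\Lambda_1,\Lambda_2$ replaced by $\Lambda_1'$ and $\Lambda_2'$; here one uses that, seen from the scale $w$, the first cluster collapses to a single point of total weight $\Lambda_1'$ and the second to a point of total weight $\Lambda_2'$. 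The equations indexed by second-cluster variables should reduce to the critical-point equations of $S$ for $(\Lambda_{\ell+1},\dots,\Lambda_{\ell+k};\mathbf b)$ at the translated points. Thus the leading system is solved precisely by $t$, $s$, $t'$. Then the Jacobian of the full rescaled system at $\varepsilon=0$ is block triangular with diagonal blocks the Hessians of these three sub-problems; $t$ and $t'$ are non-degenerate by hypothesis and $s$ is non-degenerate (either by hypothesis or via \cite{MTV:critical}), so this Jacobian is invertible. The implicit function theorem then produces a unique holomorphic branch of critical points for small $\varepsilon$ with the asserted expansion; since its Jacobian stays invertible the branch is non-degenerate as a critical point of $S_{\mathbf m}$; and the same local analysis yields the uniqueness clause, since any critical point whose coordinates obey the prescribed orders of magnitude must solve the three decoupled leading systems, each of which has a unique solution.

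The step I expect to be the main obstacle is this leading-order decoupling, and in particular the emergence of \ref{eq:Tdt} ``on the nose'' for the connecting variables. It requires careful bookkeeping of which $t_i^{(j)}$ and which $z_k$ occur in each summand of $\partial S_{\mathbf m}/\partial t_i^{(j)}$ and a comparison of their sizes in the zone $|z_1|\ll\cdots\ll|z_\ell|\ll w$, together with the ``collapse a cluster to a point of its total weight'' identity --- which is exactly where the combinatorics of $\mathbf a,\mathbf b,\mathbf c$ and of the weights enters. Once that is in place, the implicit function theorem step, the non-degeneracy, and the uniqueness are all routine.
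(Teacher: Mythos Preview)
The paper does not supply its own proof of this statement: Theorem~\ref{thm:build} is quoted as Theorem~6.1 of \cite{RV:QuasiKZ} and used as a black box in the inductive construction of Section~\ref{sec:proof}. So there is nothing in the paper to compare your argument against line by line.

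That said, your sketch is the right one and is essentially the argument of \cite{RV:QuasiKZ}: rescale by the large parameter $w=z_{\ell+1}$, observe that to leading order the critical-point system decouples into the three subsystems (first cluster, two-point connecting system \eqref{eq:Tdt}, second cluster), note that the Jacobian at $\varepsilon=0$ is block triangular with non-degenerate diagonal blocks, and invoke the implicit function theorem. Your identification of the main technical point---showing that from the scale of the connecting variables each cluster collapses to a single point carrying its total weight $\Lambda_1'$ or $\Lambda_2'$, so that \eqref{eq:Tdt} emerges exactly---is accurate. One small caution on the asymptotic zone: in the general statement the regime is that $z_1,\dots,z_\ell$ stay bounded while the whole second group $z_{\ell+1},\dots,z_{\ell+k}$ moves off to infinity with bounded mutual differences (so $w$ is their common scale), rather than the full chain $|z_1|\ll\cdots\ll|z_{\ell+k}|$; in the paper's application one always has $k=1$, so this distinction is invisible there, but it matters if you want the general theorem.
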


For our application of the theorem, we will also need the following calculation.

\begin{lemma} \label{lemma:Scalc}
Suppose that $a_1, \ldots, a_n \geq 0$ and $a_k>0$.  Then the system of equations
\begin{align*}
s_0&=1 \\
\frac{a_i}{s_i}&=\frac{1}{s_{i-1}-s_i}-\frac{1}{s_i-s_{i+1}} \qquad (1\leq i\leq n) \\
s_{n+1}&=0
\end{align*}
has the unique solution with 
$s_i=\prod_{j=1}^i{\left(1-\frac{1}{\sum_{k=j}^n (1+a_k)}\right)}$.  
In particular, we have that $s_1=1-\frac{1}{n+a_1+a_2+\cdots + a_n}$.
\end{lemma}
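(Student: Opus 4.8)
The plan is to verify the proposed closed form directly and then argue uniqueness. Write $A_j = \sum_{k=j}^{n}(1+a_k)$ for $1 \le j \le n$, with the convention $A_{n+1} = 0$ (an empty sum), so that the claimed solution is $s_i = \prod_{j=1}^{i}\left(1 - \frac{1}{A_j}\right) = \prod_{j=1}^{i}\frac{A_j - 1}{A_j}$, and $s_0 = 1$ is the empty product. Note $A_j - A_{j+1} = 1 + a_j$ and $A_j \ge A_n = 1 + a_n \ge 1$, with strict inequality somewhere since $a_k > 0$; a short induction shows each $A_j > 1$ when $j \le k$ and in any case $A_j \ge 1$, so the factors $\frac{A_j-1}{A_j}$ lie in $[0,1)$ and the $s_i$ are well defined and lie in $[0,1)$ for $i \ge 1$, with $s_i = 0$ exactly when some $A_j = 1$, i.e. when $a_j = 0$ for all $j \ge$ some index — I should track this degenerate case but it does not affect the algebraic identity. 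First I would record the ratio $\frac{s_i}{s_{i-1}} = \frac{A_i - 1}{A_i}$, equivalently $s_{i-1} - s_i = \frac{s_i}{A_i - 1}$ (valid when $A_i \ne 1$) and $s_i - s_{i+1} = s_i\left(1 - \frac{A_{i+1}-1}{A_{i+1}}\right) = \frac{s_i}{A_{i+1}}$.

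With these two expressions in hand, the middle equation becomes a pure identity: $\frac{1}{s_{i-1}-s_i} - \frac{1}{s_i - s_{i+1}} = \frac{A_i - 1}{s_i} - \frac{A_{i+1}}{s_i} = \frac{A_i - 1 - A_{i+1}}{s_i} = \frac{(A_i - A_{i+1}) - 1}{s_i} = \frac{a_i}{s_i}$, using $A_i - A_{i+1} = 1 + a_i$. So the proposed $s_i$ satisfy all the equations (the boundary conditions $s_0 = 1$ and $s_{n+1} = 0$ being immediate, since $A_{n+1} = 0$ makes the factor $\frac{A_{n+1}-1}{A_{n+1}}$ produce $s_{n+1} = 0$ — here I would instead just define $s_{n+1}=0$ as given and check the $i = n$ equation uses $s_n - s_{n+1} = s_n = \frac{s_n}{A_{n+1}+1}$... let me instead set $A_{n+1} = 0$ so $s_n - s_{n+1} = \frac{s_n}{A_{n+1}+1}$; cleaner is to just verify $i=n$ separately using $s_{n+1}=0$, giving $\frac{1}{s_{n-1}-s_n} - \frac{1}{s_n} = \frac{A_n - 1 - 1}{s_n}$? — this is the one spot to be careful, and I would do the $i=n$ and $i=1$ endpoints by hand). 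The special value $s_1 = 1 - \frac{1}{A_1} = 1 - \frac{1}{n + a_1 + \cdots + a_n}$ then follows since $A_1 = \sum_{k=1}^n (1+a_k) = n + \sum a_k$.

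For uniqueness, I would argue that the system determines $s_1, \dots, s_n$ uniquely given $s_0 = 1$, $s_{n+1} = 0$. The cleanest route: think of the recursion as a second-order difference equation. Introduce $d_i = s_{i-1} - s_i$ for $1 \le i \le n+1$, so $\sum_{i=1}^{n+1} d_i = s_0 - s_{n+1} = 1$ and the middle equation reads $\frac{1}{d_i} - \frac{1}{d_{i+1}} = \frac{a_i}{s_i}$ with $s_i = 1 - \sum_{j \le i} d_j$. Alternatively, and more robustly, I would show the solution map is rigid by a monotonicity/degree argument: treating $s_1$ as the unknown, the relations $\frac{a_i}{s_i} + \frac{1}{s_i - s_{i+1}} = \frac{1}{s_{i-1}-s_i}$ let one solve for $s_{i+1}$ in terms of $s_{i-1}, s_i$ rationally, so the whole tuple is a rational function of $s_1$ alone, and imposing $s_{n+1} = 0$ is one polynomial equation in $s_1$; one then checks this equation has a unique root in the relevant range $(0,1)$, e.g. by showing $s_{n+1}$ is a strictly monotone function of $s_1$ on that interval (each step $s_i \mapsto s_{i+1}$ being monotone when the $a_i \ge 0$). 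I expect the main obstacle to be exactly this uniqueness argument: handling the possible vanishing of some $s_i$ (when trailing $a_j$ are zero) so that the "rational function of $s_1$" description does not break down, and making the monotonicity rigorous rather than heuristic. A safe fallback is to invoke that this is precisely the Bethe/critical-point equation for a rank-one situation where Theorem \ref{thm:build} or the $\sll_2$ case of \cite{MTV:critical} already guarantees the critical point is unique, reducing uniqueness to the cited literature and leaving only the explicit verification above as genuinely new.
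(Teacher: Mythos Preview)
Your approach is exactly what the paper intends: the paper's own proof is the single sentence ``This is a straightforward computation, though we remark that the conditions on the $a_i$ ensure that there is no division by zero,'' and your telescoping verification via $A_j=\sum_{k\ge j}(1+a_k)$ supplies the details cleanly for $1\le i\le n-1$. Your uniqueness discussion (monotonicity in $s_1$, or appeal to the $\sll_2$ literature) goes beyond anything the paper offers.

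Your hesitation at the endpoint $i=n$ is not a defect in your argument but a symptom of a misprint in the lemma as stated. Already for $n=1$ the equation $a_1/s_1=1/(1-s_1)-1/s_1$ forces $s_1=(a_1+1)/(a_1+2)$, not the claimed $a_1/(a_1+1)$, so the boundary condition $s_{n+1}=0$ cannot be what is meant. In the paper's application of the lemma (matching it to equation~\eqref{eq:Tdt} with $c_1=\dots=c_{k-1}=1$) there is no variable attached to $\alpha_k$, so the last equation is actually $a_n/s_n=1/(s_{n-1}-s_n)$ with no $s_{n+1}$ term at all. With that corrected boundary condition your computation closes immediately, since $1/(s_{n-1}-s_n)=(A_n-1)/s_n=a_n/s_n$. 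So the ``spot to be careful'' you flagged is in the statement, not in your method.
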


\begin{proof}  This is a straightforward computation, though we remark that the conditions on the $a_i$ ensure that there is no division by zero, so that the given solution actually exists. \end{proof}

Let $\lambda$ be a partition of $N$, let $T$ be a standard tableau of shape $\lambda$, let $T_i$ be the restriction of $T$ to $T^{-1}(\{1,2, \ldots, i\})$, and let $\lambda^{(i)}$ be the corresponding partition of $i$.  Using theorem \ref{thm:build} we may build up our critical points corresponding to a given tableau by an inductive process.  In particular, we can construct critical points $t_{T_i}$ of weight $\lambda^{(i)}_1 L_1+\lambda^{(i)}_2 L_2 + \cdots + \lambda^{(i)}_{\ell} L_{\ell}=iL_1-(\alpha_1(\sum_{j>1}\lambda^{(i)}_j)+\alpha_2(\sum_{j>2}\lambda^{(i)}_j)+\cdots+ \alpha_{\ell}(\lambda^{(i)}_{\ell}))$, corresponding asymptotically to $v_{T_i}\in W^{\lambda^{(i)}}$.  If $x(T,i+1)=k$, then the change in weight when we pass from $i$ to $i+1$ is $L_k=L_1-(\alpha_1+\cdots+\alpha_{k-1})$.

Assume we have constructed $t_{T_i}$.  Applying theorem \ref{thm:build} with $t=t_{T_i}$, $t'$ the empty critical point of weight $\Lambda_1$ with $b_j=0$ for all $j$, and $c_1=c_2=\cdots=c_{k-1}=1$, we obtain a critical point of the proper weight for each corresponding solution to equation \ref{eq:Tdt}.  To complete the construction, we must show two things:
\begin{enumerate}
\item There is a unique solution to \ref{eq:Tdt} in this case, and thus a unique critical point under consideration.
\item Asymptotically, the eigenvalues associated to this critical point are the same as those associated to $v_{T_{i+1}}$.
\end{enumerate}

For the first point, we note that if we apply equation \ref{eq:Tdt} with $\Lambda_1=\sum_j {\lambda^{(i)}_j L_j}$, $\lambda_2=L_1$, $c_1=\cdots =c_{k-1}=1$, then up to a scaling factor, we are in the situation of lemma \ref{lemma:Scalc} with $a_j=\lambda_j^{(i)}-\lambda_{j+1}^{(i)}$.  In particular, we have a unique solution $s$ and $s_1=1-\frac{1}{(k-1)+(\lambda^{(i)}_1-\lambda^{(i)}_k)}=1-\frac{1}{\lambda^{(i)}_1+(x(T,i+1)-y(T,i+1))}.$

For the second point, we must calculate the eigenvalues associated with this critical point.  The eigenvalue for $\theta_j$ is
\begin{equation}
\sum_m \frac{1}{z_j-z_{m}}-\sum_b \frac{1}{z_j-t_1^{(b)}}\sim \frac{j-1}{z_j}-\frac{c}{z_j}-\frac{d}{z_j}
\end{equation}
where $c=(j-1)-\lambda_1^{(j-1)}$ is the number of boxes in $T$ not in the first row which contain numbers less than $j$, and $d=0$ if $x(T,j)=1$ and $\frac{1}{1-\lim t_1^{(j)}/z_j}$ otherwise.  By the construction of our critical points, the asymptotic value does not change as we pass from $i$ to $i+1$, and so it suffices to calculate this value when $j=i+1$.  In this case, the asymptotic eigenvalue is $\frac{i-(i-\lambda_1^{(i)})-(\lambda^{(i)}_1+(x(T,i+1)-y(T,i+1)))}{z_{i+1}}=\frac{y(T,i+1)-x(T,i+1)}{z_{i+1}}$, as desired.

Since the critical points give rise to a linearly independent set of eigenvectors, and since we have produced $\dim W^{\lambda}$ critical points, this must account for all such points.

\appendix

\section{On genericy of semisimplicity}

In this appendix, we will recall some facts about commuting families of linear operators.

Let $V$ be a $\C$-vector space, and let $\A \subset \operatorname{End}_{\C}(V)$ be an algebra of commuting linear operators.  Given $\mu \in \operatorname{Hom}_{\C}(\A,\C)$ we define the weight space $$V_{\mu}=\bigcap_{A\in \A}\bigcup_{k\in \N} \ker\left((A-\mu(A)I)^k\right)$$.

\begin{proposition}
The weight spaces of $V$ are $\A$-invariant, i.e., $\A V_{\mu}\subset V_{\mu}$ 
\end{proposition}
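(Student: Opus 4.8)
The plan is to exploit the commutativity of $\A$ directly. Since every element of $\A$ commutes with every other element, any $B \in \A$ commutes with every polynomial in any $A \in \A$, and in particular with the operators $(A - \mu(A)I)^k$ whose kernels define $V_\mu$. This is the only ingredient needed.

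In detail, I would fix $v \in V_\mu$ and $B \in \A$ and show $Bv \in V_\mu$. Unwinding the definition, $v \in V_\mu$ means that for each $A \in \A$ there is an integer $k = k(A)$ (depending on $A$, and on $v$) with $(A - \mu(A)I)^k v = 0$. To verify $Bv \in V_\mu$ it suffices to produce, for each $A \in \A$, some integer $m$ with $(A - \mu(A)I)^m (Bv) = 0$; and taking $m = k(A)$ works, because $AB = BA$ gives $B (A - \mu(A)I)^{k} = (A - \mu(A)I)^{k} B$, whence
$$(A - \mu(A)I)^{k(A)} (Bv) = B\left((A - \mu(A)I)^{k(A)} v\right) = B \cdot 0 = 0.$$
As $A \in \A$ was arbitrary, $Bv \in \bigcap_{A \in \A} \bigcup_{k \in \N} \ker\!\left((A - \mu(A)I)^k\right) = V_\mu$, which is what we wanted.

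I do not anticipate any real obstacle; the only subtlety is bookkeeping about the order of the intersection and the union. The exponent $k(A)$ need not be uniform in $A$, but this is harmless: in the definition of $V_\mu$ the union over $k$ is taken separately for each $A$, so exhibiting a single annihilating power of $A - \mu(A)I$ for each individual $A$ is exactly what is required. One could also observe that the hypothesis that $\mu$ be an algebra homomorphism plays no role in this particular statement — only the commutativity of $\A$ is used — though there is no harm in retaining it.
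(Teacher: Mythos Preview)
Your proof is correct and is essentially identical to the paper's own argument: both use commutativity to pass $B$ through $(A-\mu(A)I)^k$ and conclude that the same exponent kills $Bv$. Your additional remarks about the dependence of $k$ on $A$ and the irrelevance of $\mu$ being an algebra homomorphism are accurate but not needed for the proof.
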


\begin{proof}
If $A,B\in \A$, then $B$ commutes with $(A-\mu(A)I)^k$, and so if $(A-\mu(A)I)^k v=0$, then $(A-\mu(A)I)^k (Bv)=B((A-\mu(A)I)^k v)=B(0)=0$.  
\end{proof}

We refer to the set $\{\mu \mid V_{\mu}\neq 0\}$ as the \emph{joint spectrum} of $\A$.  By abuse of notation, if $\{A_\alpha\}$ is a set of commuting linear operators, we also use the term joint spectrum to refer to the joint spectrum of the algebra generated by the $A_{\alpha}$.  In many situations, the joint spectrum gives a lot of information about the action of $\A$.  If $V$ is finite dimensional or more generally, if the action of $\A$ is locally finite, i.e., $\A v$ is finite dimensional for every $v\in V$, then because $\C$ is algebraically closed, we must have that $V=\bigoplus_{\mu} V_{\mu}$.  Note that this can fail if we are not locally finite: the shift operator on $\C^{\N}$ has no nonzero eigenspaces.  In what follows, we will assume that $V$ has such a decomposition.

We say that the action of $\A$ on $V$ is semisimple if we can find a basis of $V$ such that, with respect to the basis, every $A\in \A$ is diagonal.  Equivalently, $V$ is the direct sum of one dimensional submodules.  Because each $V_{\mu}$ is an $\A$ submodule, we see that the action is semisimple if and only if for every $A\in \A$, we have that $\ker(A^k)=\ker(A)$.  Since $\ker(A)=V$ if and only if $A=0$, this implies that a semisimple action cannot occur if $\A$ contains any nilpotents.  Using this observation, the following proposition justifies the term semisimple action.

\begin{proposition}
Suppose that $V$ is finite dimensional.  Then $\A$ is semisimple as an algebra if and only if the action of $\A$ on $V$ is semisimple.
\end{proposition}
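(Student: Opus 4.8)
The plan is to prove the equivalence of the two notions of semisimplicity — semisimplicity of $\A$ as an abstract finite-dimensional $\C$-algebra, and semisimplicity of its action on $V$ — by relating both to the vanishing of nilpotent elements. First I would handle the forward direction: assume $\A$ is a semisimple algebra. Then by Artin--Wedderburn, $\A$ is a product of matrix algebras over $\C$ (since $\C$ is algebraically closed), but more to the point, since $\A$ is commutative, $\A \cong \C^m$ for some $m$, so $\A$ has a basis of orthogonal idempotents $e_1, \ldots, e_m$ with $\sum e_i = I$. These decompose $V = \bigoplus_i e_i V$, and on each $e_i V$ every element of $\A$ acts as a scalar (namely $A$ acts as $\mu_i(A)$ where $\mu_i$ is the $i$-th coordinate projection $\A \to \C$); splitting each $e_i V$ into lines gives a basis diagonalizing all of $\A$ simultaneously. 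Alternatively, and perhaps more in the spirit of the surrounding discussion, one observes that a semisimple commutative algebra has no nonzero nilpotents, and then invokes the criterion established just above in the text: the action is semisimple iff $\ker(A^k) = \ker(A)$ for all $A \in \A$, which (as noted there) holds whenever $\A$ contains no nilpotents — I would need to actually verify that implication carefully, since the text only asserts the contrapositive direction, that nilpotents obstruct semisimplicity.

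For the reverse direction, assume the action on $V$ is semisimple, i.e., $V$ has a basis with respect to which every $A \in \A$ is diagonal. I want to conclude $\A$ is a semisimple algebra, equivalently (for a finite-dimensional commutative algebra) that its Jacobson radical is zero, equivalently that it has no nonzero nilpotent elements. But this is immediate: if $A \in \A$ is diagonal with respect to a fixed basis and $A^k = 0$, then each diagonal entry of $A$ is a scalar whose $k$-th power is zero, hence is zero, so $A = 0$. Thus $\A$ has no nonzero nilpotents, and a finite-dimensional commutative $\C$-algebra with trivial nilradical is a product of fields, each of which must be $\C$ since $\C$ is algebraically closed and the algebra is finite-dimensional; hence $\A$ is semisimple.

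The one genuine subtlety — and the step I expect to be the main obstacle — is pinning down the direction "$\A$ semisimple as an algebra $\Rightarrow$ action semisimple" cleanly. The cited criterion in the text ("the action is semisimple iff $\ker(A^k) = \ker(A)$ for all $A$") is the convenient bridge, but to use it I must show that if $\A$ has no nilpotents then $\ker(A^k) = \ker(A)$: given $v$ with $A^k v = 0$, I want $A v = 0$, and this is not automatic from $A$ alone (the operator $A$ on $V$ need not be semisimple just because the algebra has no nilpotents — e.g. $A$ could be a nilpotent \emph{operator} while being a nonzero \emph{element}... no, wait, if $A$ is a nilpotent operator on a faithful-enough module then $A$ is a nilpotent element; but $V$ need not be faithful). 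So the honest route is probably: semisimple commutative finite-dimensional $\C$-algebra $\cong \C^m$, hence has a complete system of orthogonal idempotents, hence every $\A$-module is a direct sum of one-dimensional modules on which $\A$ acts through the coordinate characters — and that last statement is exactly semisimplicity of the action. I would write the proof via the idempotent decomposition in both directions, as it avoids the faithfulness gap entirely: $\A \cong \C^m \iff \A$ has a complete orthogonal idempotent system $\iff$ every $\A$-module decomposes into $\A$-eigenlines, and the middle and right conditions are stable under passing to the specific module $V$ regardless of faithfulness.
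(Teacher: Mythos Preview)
Your proof is correct and follows essentially the same route as the paper: both directions go through the equivalence, for a finite-dimensional commutative $\C$-algebra, between semisimplicity and the vanishing of the nilradical, and the forward direction in both cases amounts to decomposing $V$ into simple (hence one-dimensional) $\A$-modules---the paper phrases this as ``decomposing the weight spaces into simple $\A$-modules,'' while you phrase it via the orthogonal idempotents of $\A\cong\C^m$, but these are the same decomposition.

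One remark: your worry about faithfulness is unfounded in this setting and makes the discussion longer than necessary. By hypothesis $\A\subset\operatorname{End}_{\C}(V)$, so $V$ is automatically a faithful $\A$-module; an element $A\in\A$ that acts nilpotently on $V$ \emph{is} a nilpotent element of $\A$. Concretely, if $\A$ has no nilpotents then for each $A\in\A$ and each eigenvalue $\lambda$, the element $\prod_{\mu}(A-\mu I)\in\A$ (product over distinct eigenvalues of $A$) is nilpotent as an operator, hence zero in $\A$, hence $A$ is diagonalizable; commuting diagonalizable operators are simultaneously diagonalizable. So the bridge ``no nilpotents $\Rightarrow$ action semisimple'' does go through directly here, without needing to route around a faithfulness gap. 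Your idempotent argument is still a perfectly clean way to do it, but you need not have been nervous about the alternative.
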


\begin{proof}
Since $\A$ is finitely generated and commutative, its Jacobson radical is equal to its nilradical, and so $\A$ is semisimple if and only if it has no nilpotent elements.  If $\A$ is not semisimple, then $\A$ contains nilpotents, and we see that the action of $\A$ on $V$ is not semisimple.  Conversely, assume that $\A$ is semisimple.  Then decomposing the weight spaces into simple $\A$ modules, we see no $A\in \A$ can act nilpotently.
\end{proof}

A matrix being diagonalizable is not an open condition: the identity matrix is diagonal, but no matrix of the form $\begin{pmatrix} 1 & \epsilon \\ 0 & 1 \end{pmatrix}$ with $\epsilon\neq 0$ is diagonalizable.    However, the condition that a matrix have distinct eigenvalues (which implies diagonalizability) is an open condition.  Indeed, a matrix $M$ fails to have distinct eigenvalues if and only of $C_M(\lambda)=\det(M-\lambda I)$ has repeated roots, which occurs if and only if $C_M$ and $C_M'$ have a common root.  Since this occurs exactly when the resultant $\operatorname{res}(C_M,C_M')=0$, we have a polynomial condition in the entries of $M$ for when $M$ has repeated eigenvalues, and thus the condition is Zariski open.

Similarly, semisimplicity of an action is not an open condition.  However, if every nonzero $V_{\mu}$ is one dimensional, this implies semisimplicity, and if $V$ is finite dimensional and $\A$ is finitely generated, this is a an open condition.

\begin{proposition}
Let $X\subset \operatorname{Mat}_n(\C)^k$ be the subvariety of $k$-tuples of $n\times n$ matrices which pairwise commute.  Then the subset $X'\subset X$ of $k$-tuples with joint spectrum of size $n$ is Zariski open.
\end{proposition}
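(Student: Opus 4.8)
The plan is to reduce the condition ``joint spectrum of size $n$'' to the existence of a single linear combination $\sum_i c_i A_i$ having $n$ distinct eigenvalues, and then to observe that the latter is governed by non-vanishing of polynomials in the entries of the $A_i$.

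First I would establish, for a commuting tuple $\underline A=(A_1,\dots,A_k)\in X$ acting on $V=\C^n$: the joint spectrum of $\underline A$ has size $n$ if and only if there is some $c=(c_1,\dots,c_k)\in\C^k$ for which $M_c:=\sum_i c_iA_i$ has $n$ distinct eigenvalues. Here I let $\A\subset\operatorname{Mat}_n(\C)$ be the (finite-dimensional, commutative) algebra generated by the $A_i$, so the weight-space discussion of the appendix applies and $\C^n=\bigoplus_\mu V_\mu$. Note that this already bounds the joint spectrum by $n$, so ``size $n$'' is equivalent to ``every nonzero $V_\mu$ is one-dimensional''. For the ``only if'' direction, size $n$ forces each $V_\mu$ to be a line and $\underline A$ to be simultaneously diagonalizable; the $n$ weights are distinct points $\underline\mu=(\mu(A_1),\dots,\mu(A_k))\in\C^k$ (distinct weights differ on some $A_i$, since they are algebra homomorphisms), and $M_c$ acts on $V_\mu$ by $\langle c,\underline\mu\rangle$, which takes $n$ distinct values for every $c$ outside the finitely many proper hyperplanes $\langle c,\underline\mu-\underline\mu'\rangle=0$. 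For the ``if'' direction, if $M_c$ has $n$ distinct eigenvalues it is diagonalizable with one-dimensional eigenlines $L_1,\dots,L_n$; each $A_i$ commutes with $M_c$, hence preserves each $L_j$ and acts on it by a scalar, so $L_j$ is a common eigenline lying in some $V_{\mu_j}$; the $\mu_j$ are distinct because $\mu_j(M_c)$ are the distinct eigenvalues of $M_c$, and comparing dimensions in $\bigoplus_j L_j\subseteq\bigoplus_j V_{\mu_j}\subseteq\C^n$ shows the joint spectrum is exactly $\{\mu_1,\dots,\mu_n\}$.

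Next I would package the second condition polynomially. Introducing indeterminates $c_1,\dots,c_k$, consider $C(\lambda;\underline A,c)=\det(\lambda I_n-\sum_i c_iA_i)$, a polynomial in $\lambda$ and the $c_i$ whose coefficients are polynomials in the entries of the $A_i$. Its discriminant in $\lambda$ (equivalently $\operatorname{res}_\lambda(C,\partial C/\partial\lambda)$ up to units), call it $D(\underline A,c)$, is a polynomial in $c$ whose coefficients are polynomials in the entries of $\underline A$, and $M_c$ has $n$ distinct eigenvalues precisely when $D(\underline A,c)\neq0$, exactly as in the single-matrix discussion recalled earlier in the appendix. Writing $D(\underline A,c)=\sum_\alpha q_\alpha(\underline A)\,c^\alpha$, there exists $c$ with $D(\underline A,c)\neq0$ if and only if the polynomial $D(\underline A,-)$ is not identically zero, i.e.\ if and only if $q_\alpha(\underline A)\neq0$ for some $\alpha$. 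Combining with the first step, $X'=X\cap\bigcup_\alpha\{\underline A:q_\alpha(\underline A)\neq0\}$, a union of principal open subsets of $\operatorname{Mat}_n(\C)^k$ intersected with $X$, hence Zariski open in $X$.

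I expect the genuine work to be the equivalence of the second paragraph — in particular the bookkeeping between one-dimensionality of all weight spaces, simultaneous diagonalizability, and separation of the weights by a generic linear functional — while the passage to the discriminant and the ``$\exists c$'' step are formal once that equivalence is in hand. A small point worth stating explicitly is that passing from ``$D(\underline A,-)\not\equiv0$'' to an open condition on $\underline A$ uses only that a polynomial is nonzero iff one of its coefficients is nonzero; no openness-of-projections machinery is needed.
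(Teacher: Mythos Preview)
Your argument is correct and matches the paper's approach: both establish that $(A_1,\dots,A_k)\in X'$ if and only if some linear combination $\sum_i c_iA_i$ has $n$ distinct eigenvalues, with the same bidirectional reasoning via weight spaces. The only difference is in the final step: the paper writes $X'=p_X(\phi^{-1}(Y))$ for $Y\subset\operatorname{Mat}_n(\C)$ the open set of matrices with distinct eigenvalues and appeals to openness of the projection $X\times\C^k\to X$, whereas you expand the discriminant $D(\underline A,c)=\sum_\alpha q_\alpha(\underline A)\,c^\alpha$ and read off $X'=X\cap\bigcup_\alpha\{q_\alpha\neq 0\}$ directly --- as you observe, this sidesteps any projection argument and is slightly more elementary.
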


\begin{proof}
Consider the map $\phi:X\times \C^k \to \operatorname{Mat}_n(\C)$ defined by $(M_1, \ldots M_k, a_1, \ldots, a_k)\mapsto \sum a_i M_i$.  If $x=(M_1, \ldots, M_k)\in X'$ with weights $\mu_1, \ldots \mu_n$ with $\mu_i(M_j)=b_{ij}$, then $\phi(x,a_1, \ldots a_k)$ has eigenvalues $c_i=\sum_j b_{ij} a_j$.  Except for a finite union of hyperplanes in $\C^k$, any $\phi(x,a_1, \ldots a_k)$ has distinct eigenvalues.  Conversely, if $y\in X\backslash X'$, then $\phi(y,a_1, \ldots, a_k)$ cannot have $n$ distinct eigenvalues.  Therefore if we let $Y\subset \operatorname{Mat_n(\C)}$ denote the matrices with $n$ distinct eigenvalues, $X'=p_X(\phi^{-1}(Y))$, which is open.
\end{proof}

\bibliographystyle{amsalpha}
\bibliography{KZ}

\end{document}